\newcommand{\ie}{\emph{i.e.}}
\newcommand{\eg}{\emph{e.g.}}
\newcommand{\cf}{\emph{cf.}}
\newcommand{\Real}{\mathbb{R}}
\newcommand{\Com}{\mathbb{C}}
\newcommand{\Int}{\mathbb{Z}}
\newcommand{\sgn}{\mathop{\mathrm{sgn}}\nolimits}
\newcommand{\rot}{\mathop{\mathrm{rot}}\nolimits}
\newcommand{\Dom}{\mathsf{D}}
\newcommand{\dist}{\mathop{\mathrm{dist}}\nolimits}
\newcommand{\vertiii}[1]{{\left\vert\kern-0.25ex\left\vert\kern-0.25ex\left\vert #1 
    \right\vert\kern-0.25ex\right\vert\kern-0.25ex\right\vert}}
\newcommand{\sii}{L^2}
\newcommand{\der}{\mathrm{d}}
\newtheorem{Theorem}{Theorem}
\newtheorem{Lemma}{Lemma}
\newtheorem{Proposition}{Proposition}
\theoremstyle{definition}
\newtheorem{Remark}{Remark}
\definecolor{DarkGreen}{rgb}{0,0.5,0.1} 
\newcommand\soutD{\bgroup\markoverwith
{\textcolor{DarkGreen}{\rule[.5ex]{2pt}{1pt}}}\ULon}
\newcommand\soutP{\bgroup\markoverwith
{\textcolor{blue}{\rule[.5ex]{2pt}{1pt}}}\ULon}
\newcommand{\Hm}[1]{\leavevmode{\marginpar{\tiny%
$\hbox to 0mm{\hspace*{-0.5mm}$\leftarrow$\hss}%
\vcenter{\vrule depth 0.1mm height 0.1mm width \the\marginparwidth}%
\hbox to
0mm{\hss$\rightarrow$\hspace*{-0.5mm}}$\\\relax\raggedright #1}}}
\begin{document}
%
\title{\textbf{\Large
Absence of eigenvalues of two-dimensional 
magnetic Schr\"odinger operators
}}
\author{Luca Fanelli, David Krej\v{c}i\v{r}{\'\i}k and Luis Vega}
\date{\small 
\emph{
\begin{quote}
\begin{itemize}
\item[$a)$] 
Dipartimento di Matematica, SAPIENZA Universit\`a di Roma,
P.~le Aldo Moro 5, 00185 Roma;
fanelli@mat.uniroma1.it.%
\item[$b)$] 
Department of Mathematics, Faculty of Nuclear Sciences and 
Physical Engineering, Czech Technical University in Prague, 
Trojanova 13, 12000 Prague 2, Czechia;
david.krejcirik@fjfi.cvut.cz.%
\\
\item[$c)$]
Departamento de Matem\'aticas, Universidad del Pais Vasco, 
Aptdo.~644, 48080 Bilbao, \&
Basque Center for Applied Mathematics (BCAM), 
Alameda Mazarredo 14, 48009 Bilbao, Spain; 
luis.vega@ehu.es \& lvega@bcamath.org.
\end{itemize}
\end{quote}
}
\smallskip
17 October 2017}
\maketitle

\begin{abstract}
\noindent
By developing the method of multipliers,
we establish sufficient conditions on the electric potential
and magnetic field
which guarantee that the corresponding two-dimensional 
Schr\"odinger operator possesses no point spectrum.
The settings of complex-valued electric potentials
and singular magnetic potentials of Aharonov-Bohm field 
are also covered.
\end{abstract}
%

%

\section{Introduction}
%
Given a vector potential $A : \Real^2 \to \Real^2$
and a scalar potential $V :\Real^2 \to \Real$,
we consider the Schr\"odinger operator
\begin{equation}\label{operator}
  H_{A,V} := (-i\nabla+A)^2 + V
  \qquad \mbox{in} \qquad \sii(\Real^2)
  \,.
\end{equation}
It is the quantum Hamiltonian
of a non-relativistic electron interacting 
with the (vector) electric field $E=-\nabla V$
and the (scalar) magnetic field
\begin{equation}\label{Hodge.2D}
  B = \partial_1 A_2 - \partial_2 A_1
  \,.
\end{equation}

Writing $V= V_+ - V_-$ with $V_\pm$ being non-negative, 
we always assume that the negative part~$V_-$ 
of the electric potential~$V$
is small in a suitable sense (see~\eqref{Ass1}), 
in order to make~$H_{A,V}$ self-adjoint	
and bounded from below.
More specifically, we understand~$H_{A,V}$ as the Friedrichs extension
of the symmetric operator~\eqref{operator} 
initially defined on smooth compactly supported functions. 

The two-dimensional situation is special in the sense that
$\Real^2$ is the lowest dimensional Euclidean space for
which the addition of the magnetic potential is non-trivial,
while the unperturbed operator $H_{0,0}$ is still \emph{critical},
\ie\ unstable under small perturbations. 
In fact, it is well known that
the purely electric operator~$H_{0,V}$ possesses negative discrete 
eigenvalues whenever~$V$ is negative and compactly supported,
and there also exist examples of potentials generating
positive eigenvalues embedded in the essential spectrum.
At the same time, the spectrum
of the purely magnetic operator~$H_{A,0}$ can be quite general, 
ranging from the purely essential spectrum $[0,\infty)$
for compactly supported magnetic field~$B$ (Aharonov-Bohm solenoid),
through the discrete set of infinitely degenerated eigenvalues
(Landau levels) for constant $B\not=0$,
to purely discrete spectrum if~$|B|$ diverges at infinity (magnetic bottles).

The objective of this paper is to identify physically relevant 
conditions which guarantee 
a \emph{total absence of eigenvalues} of~$H_{A,V}$.
Because of the gauge invariance, 
these conditions should be given in terms of
the physical quantity~$B$ and not~$A$.
To state the desired result, 
we use the abbreviations $\nabla_{\!A} := \nabla+iA$ for the magnetic gradient,
$r(x):=|x|$ for the distance function from the origin
and $\partial_r f(x) := \frac{x}{|x|} \cdot \nabla f(x)$
for the radial derivative of a function~$f$.

\begin{Theorem}\label{Thm}
Let $A \in \sii_\mathrm{loc}(\Real^2;\Real^2)$ 
be such that $B \in L_\mathrm{loc}^2(\Real^2)$.
Suppose that $V \in L^1(\Real^2;\Real)$
admits the decomposition
$V = V^{(1)} + V^{(2)}$ 
with $V^{(1)} \in W^{1,1}_\mathrm{loc}(\Real^2)$ 
and $V^{(2)} \in L_\mathrm{loc}^2(\Real^2)$.
Assume that there exist numbers $b,b_1,b_2,b_3,b_4 \in [0,1)$ 
satisfying
\begin{equation}\label{Ass.b}
  b_1 +b_2^2 + b_3^2 + b_4 < 1
  \,,
\end{equation}
such that, for all $\psi \in C_0^\infty(\Real^2)$,
\begin{equation}\label{Ass1}
  \int_{\Real^2} 
  V_-(x) \, |\psi|^2 
  \leq b^2 \int_{\Real^2} |\nabla_{\!A} \psi|^2 
  \,, 
\end{equation}
and
\begin{equation}\label{Ass2}
\begin{aligned}
  \int_{\Real^2} 4 \, r^2 \, |B|^2 \, |\psi|^2 
  &\leq b_1^2 \int_{\Real^2} |\nabla_{\!A} \psi|^2 \,,
  &&&
  \int_{\Real^2} [\partial_r(r \, V^{(1)})]_+ \, |\psi|^2
  &\leq b_2^2 \int_{\Real^2} |\nabla_{\!A} \psi|^2 \,,
  \\
  \int_{\Real^2} |V^{(2)}| \, |\psi|^2
  &\leq b_3^2 \int_{\Real^2} |\nabla_{\!A} \psi|^2 \,,
  &&&
  \int_{\Real^2} 4 \, r^2 \, |V^{(2)}|^2 \, |\psi|^2
  &\leq b_4^2 \int_{\Real^2} |\nabla_{\!A} \psi|^2 \,,
\end{aligned}
\end{equation}
Then $H_{A,V}$ has no eigenvalues, \ie\
$\sigma_\mathrm{p}(H_{A,V}) = \varnothing$.
\end{Theorem}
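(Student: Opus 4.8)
Here is how I would approach the theorem.

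The plan is to argue by contradiction via the \emph{method of multipliers}: I would pair the eigenvalue equation with two carefully chosen test functions, producing an energy identity and a virial (Rellich--Pohozaev) identity, and combine them to force the eigenfunction to vanish. So suppose $\lambda\in\Real$ is an eigenvalue of $H_{A,V}$ with eigenfunction $\psi$ normalised so that $\|\psi\|_{\sii(\Real^2)}=1$, i.e. $(-i\nabla+A)^2\psi+V\psi=\lambda\psi$.

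The first task, and the step I expect to be the main obstacle, is entirely qualitative: one must establish that $\nabla_{\!A}\psi\in\sii(\Real^2)$, that $\psi$ is regular enough — away from the origin and, so as to cover the Aharonov--Bohm case, also in a neighbourhood of it — for the integrations by parts below to be licit, that the hypotheses \eqref{Ass1}--\eqref{Ass2}, stated for $\psi\in C_0^\infty(\Real^2)$, extend to the eigenfunction by a density argument in the form domain of $H_{A,V}$, and — most delicately — that $\psi$ and $\nabla_{\!A}\psi$ decay fast enough at infinity for the boundary terms arising as $r\to0$ and $r\to\infty$ to disappear. It is exactly here that the integrability assumptions $V\in\si(\Real^2)$ and $B\in\sii_\mathrm{loc}(\Real^2)$ are needed; this can be isolated into an auxiliary regularity/decay lemma, after which the rest is bookkeeping.

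Granting that, I would first record the \emph{energy identity}: pairing the equation with $\psi$ gives $\|\nabla_{\!A}\psi\|^2+\int_{\Real^2}V|\psi|^2=\lambda$, and since $V\ge-V_-$, assumption \eqref{Ass1} yields $\lambda\ge(1-b^2)\|\nabla_{\!A}\psi\|^2\ge0$, so only non-negative $\lambda$ need be excluded. Then I would derive the \emph{virial identity} by pairing the equation with the radial multiplier $r\,\frac{x}{|x|}\cdot\nabla_{\!A}\psi+\tfrac12\,\psi$. Two features special to two dimensions dictate the constant $\tfrac12$: the free radial contribution collapses to exactly $\tfrac12\|\nabla_{\!A}\psi\|^2$ — in higher dimensions one would also see a term proportional to $\int|\psi|^2$, whose absence here reflects the criticality of $H_{0,0}$ — while the electric potential contributes precisely $-\tfrac12\int_{\Real^2}\partial_r(rV)\,|\psi|^2$. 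Commuting $\nabla_{\!A}$ past $\frac{x}{|x|}\cdot\nabla_{\!A}$ produces a magnetic correction controlled, by Cauchy--Schwarz, by $\bigl(\int_{\Real^2}4r^2|B|^2|\psi|^2\bigr)^{1/2}\|\nabla_{\!A}\psi\|$.

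Finally I would combine the two identities. Writing $V=V^{(1)}+V^{(2)}$, the term $\partial_r(rV^{(1)})$ is kept as it stands because $V^{(1)}\in W^{1,1}_\mathrm{loc}(\Real^2)$, whereas in the $V^{(2)}$ term one integrates by parts to move the radial derivative onto $|\psi|^2$, using the diamagnetic inequality $|\partial_r|\psi||\le|\nabla_{\!A}\psi|$; discarding the favourable term $\tfrac{\lambda}{2}\ge0$ then leaves
\[
  \|\nabla_{\!A}\psi\|^2
  \le \int_{\Real^2}[\partial_r(rV^{(1)})]_+|\psi|^2
  + \int_{\Real^2}|V^{(2)}|\,|\psi|^2
  + \Bigl(\int_{\Real^2}4r^2|V^{(2)}|^2|\psi|^2\Bigr)^{1/2}\!\|\nabla_{\!A}\psi\|
  + \Bigl(\int_{\Real^2}4r^2|B|^2|\psi|^2\Bigr)^{1/2}\!\|\nabla_{\!A}\psi\|\,.
\]
Inserting the four bounds of \eqref{Ass2} makes the right-hand side at most $(b_1+b_2^2+b_3^2+b_4)\|\nabla_{\!A}\psi\|^2$, hence $(1-b_1-b_2^2-b_3^2-b_4)\,\|\nabla_{\!A}\psi\|^2\le0$. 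By \eqref{Ass.b} this forces $\nabla_{\!A}\psi=0$, so $|\psi|$ is constant by the diamagnetic inequality and therefore $\psi\equiv0$ in $\sii(\Real^2)$, contradicting $\|\psi\|=1$. Thus $\sigma_\mathrm{p}(H_{A,V})=\varnothing$.
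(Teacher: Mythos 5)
Your proposal is correct and follows essentially the same route as the paper: the method of multipliers, i.e.\ combining the energy identity with a virial (Pohozaev-type) identity generated by the dilation multiplier, estimating the magnetic and $V^{(2)}$ contributions by the Cauchy--Schwarz and diamagnetic inequalities, and closing the argument with \eqref{Ass1}--\eqref{Ass2} and \eqref{Ass.b}; the paper organises the same computation through the multipliers $G_1=1$, $G_3=|x|^2$ and an exponential twist $u\mapsto u^-$ (needed only for complex spectral parameters), which in the self-adjoint case reduces to your identity with the favourable $\lambda$-term absorbed rather than discarded. The qualitative justification of the integrations by parts and of applying the $C_0^\infty$ hypotheses to the eigenfunction, which you rightly isolate as the main obstacle and defer to an auxiliary lemma, is likewise deferred by the paper to a cutoff-and-mollification argument borrowed from its higher-dimensional predecessor.
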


The subordination condition~\eqref{Ass1} immediately implies
that no non-positive number (including zero) can be an eigenvalue of~$H_{A,V}$.
The interest of the theorem lies in~\eqref{Ass2} with~\eqref{Ass.b},
which is a sufficient condition to avoid the existence
of positive eigenvalues.  
The latter is clearly satisfied if $B=0$, $V^{(2)}=0$ 
and $\partial_r(r V^{(1)}) \leq 0$, 
where the last condition is a classical repulsiveness requirement.
If $B\not=0$, $V^{(2)}\not=0$ 
or $[\partial_r(r V^{(1)})]_+ \not= 0$, however,
it is not \emph{a priori} clear that~\eqref{Ass2} is not void. 
In fact, if there is no magnetic field (\ie~$B\not=0$)
and $V^{(2)}\not=0$ or $[\partial_r(r V^{(1)})]_+ \not= 0$,
the criticality of the two-dimensional free Hamiltonian~$H_{0,0}$
implies that \eqref{Ass2}~cannot be satisfied
(a similar statement holds for~\eqref{Ass1} with $V_-\not=0$).
If $V^{(2)}\not=0$ or $[\partial_r(r V^{(1)})]_+ \not= 0$,
it is therefore necessary that there is a magnetic field
to exclude the existence of eigenvalues via Theorem~\ref{Thm}.
The following proposition particularly ensures 
that~\eqref{Ass2} is generally not void.

\begin{Proposition}
If there exist numbers $b,b_1,b_2,b_3,b_4 \in [0,1)$ such that
\begin{equation}\label{suff1}
  V_- \leq \pm b^2 \, B \,,
\end{equation}
and
\begin{equation}\label{suff2}
\begin{aligned}
  4 \, r^2 \, |B|^2 
  &\leq \pm b_1^2 \, B
  \,,
  &&&
  [\partial_r(r \, V^{(1)})]_+  
  &\leq \pm b_2^2 \, B
  \,,
  \\
  |V^{(2)}|  
  &\leq \pm b_3^2 \, B 
  \,,
  &&&
  4 \, r^2 \, |V^{(2)}|^2  
  &\leq \pm b_4^2 \, B 
  \,,
\end{aligned}
\end{equation}
with either a plus or minus sign, 
then~\eqref{Ass1} and~\eqref{Ass2} hold. 
\end{Proposition}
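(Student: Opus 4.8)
The plan is to reduce each of the five integral inequalities in \eqref{Ass1}--\eqref{Ass2} to the single diamagnetic-type bound that the magnetic field satisfies, namely that $\pm B$ controls the magnetic Dirichlet form. The crucial analytic input is the \emph{magnetic Hardy--Poincar\'e inequality in two dimensions}: for every $\psi \in C_0^\infty(\Real^2)$,
\begin{equation*}
  \int_{\Real^2} |B| \, |\psi|^2
  \leq \int_{\Real^2} |\nabla_{\!A} \psi|^2
  \,,
\end{equation*}
which in the stronger signed form reads $\pm\int_{\Real^2} B\,|\psi|^2 \leq \int_{\Real^2}|\nabla_{\!A}\psi|^2$. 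This follows from expanding $|\nabla_{\!A}\psi|^2$ in polar coordinates, or more directly from the pointwise identity $\pm B\,|\psi|^2 = \pm\,\rot(\mathrm{something})\cdot|\psi|^2$ integrated by parts against $|\nabla_{\!A}\psi|^2$; I expect the paper to have recorded (or will record) this inequality, so I would cite it as the workhorse. Granting it, the proof is essentially three lines of monotonicity.

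First I would treat the minus part of the potential: assumption \eqref{suff1} says $V_- \leq \pm b^2 B$ pointwise (with a fixed choice of sign, the same for all estimates). Multiplying by $|\psi|^2 \geq 0$ and integrating preserves the inequality, so
\begin{equation*}
  \int_{\Real^2} V_- |\psi|^2
  \leq b^2 \int_{\Real^2} (\pm B)\,|\psi|^2
  \leq b^2 \int_{\Real^2} |\nabla_{\!A}\psi|^2
  \,,
\end{equation*}
which is exactly \eqref{Ass1}. Next I would run the identical argument four times, once for each inequality in \eqref{suff2}: in each case the left-hand side is a non-negative measurable function (respectively $4r^2|B|^2$, $[\partial_r(rV^{(1)})]_+$, $|V^{(2)}|$, and $4r^2|V^{(2)}|^2$) which by hypothesis is pointwise $\leq \pm b_j^2 B$; multiplying by $|\psi|^2$, integrating, and applying the magnetic Hardy inequality with the same sign convention yields each of the four estimates in \eqref{Ass2} with constants $b_1^2,\dots,b_4^2$. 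Since the signs in \eqref{suff1}--\eqref{suff2} are required to be the same throughout, no conflict arises, and the constants $b,b_1,\dots,b_4 \in [0,1)$ transfer verbatim.

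The only real subtlety — the point I would be most careful about — is whether the signed magnetic Hardy inequality $\pm\int B\,|\psi|^2 \leq \int|\nabla_{\!A}\psi|^2$ is genuinely available at the stated regularity ($A \in L^2_{\mathrm{loc}}$, $B \in L^2_{\mathrm{loc}}$) and without any flux or normalization restriction. If the paper only has the inequality with $|B|$ on the left (not signed $B$), then \eqref{suff2} should really be read with $|B|$ in place of $\pm B$, or one invokes the sign-definite case; I would state the lemma I am using explicitly and, if necessary, sketch its one-line proof via the substitution $\psi = e^{-\phi}u$ where $\phi$ is a scalar potential with $\pm B = \Delta\phi$ locally, reducing the bound to $\int(\pm\Delta\phi)|u e^{-\phi}|^2 \leq \int |\nabla(ue^{-\phi}) + \dots|^2$ and a completion-of-squares. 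Everything else is elementary: pointwise domination is stable under multiplication by $|\psi|^2$ and integration, so the proposition is a direct corollary of the one Hardy-type inequality.
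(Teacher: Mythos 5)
Your argument is exactly the paper's: the proposition is reduced, by pointwise domination and integration against $|\psi|^2$, to the signed lower bound $\int_{\Real^2}|\nabla_{\!A}\psi|^2 \geq \pm\int_{\Real^2} B\,|\psi|^2$ (valid with either sign), which the paper simply quotes from Balinsky--Laptev--Sobolev, so your hedging about its availability is unnecessary --- it is the standard supersymmetric/completion-of-squares identity and holds at the stated regularity. The proof is correct and follows the same route as the paper.
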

\begin{proof}
The claim follows from the well-known lower bound
$$
  \forall \psi \in C_0^\infty(\Real^2) \,, \qquad
  \int_{\Real^2} |\nabla_{\!A} \psi|^2  
  \geq \int_{\Real^2} \pm B \, |\psi|^2  
  \,,
$$
which holds with either of the signs~$\pm$
(see, \eg, \cite{Balinsky-Laptev-Sobolev_2004}).
\end{proof}

For instance, if $V=0$ 
and the magnetic field $B$ is of definite sign,
sufficiently small in the supremum norm
and with a sufficiently small support, 
then~\eqref{suff1} and~\eqref{suff2}, 
and therefore~\eqref{Ass1} and~\eqref{Ass2}, hold. 

\begin{Remark}
Another source of sufficient conditions 
to guarantee~\eqref{Ass1} and~\eqref{Ass2}
are in principal magnetic Hardy-type inequalities,
reflecting the \emph{subcriticality} of~$H_{A,0}$ 
whenever $B \not= 0$. 
For continuous~$A$ satisfying the transverse gauge $x \cdot A(x) = 0$
and locally integrable~$B$,
the Laptev-Weidl magnetic Hardy-type inequality
(see~\cite{Laptev-Weidl_1999}) states
\begin{equation}\label{LW} 
  \forall \psi \in C_0^\infty(\Real^2) \,, \qquad
  \int_{\Real^2} |\nabla_{\!A} \psi(x)|^2 \, \der x
  \geq \int_{\Real^2} 
  \frac{\dist(\Phi_B(x),\Int)^2}{|x|^2} \, |\psi(x)|^2
  \, \der x
  \,,
\end{equation}
where
$$
  \Phi_B(x) := 
  \frac{1}{2\pi} \int_{\{|\xi|\leq |x|\}} B(\xi) \, \der \xi
$$ 
denotes the \emph{magnetic flux} through the sphere of radius~$|x|$
centred at the origin.
Unfortunately, the pointwise condition 
$
  4 |x|^2 |B(x)|^2 \leq |x|^{-2} \dist(\Phi_B(x),\Int)^2
$,
to guarantee the first inequality of~\eqref{Ass2} via~\eqref{LW},
can never be satisfied for all sufficiently small~$|x|$.
From~\eqref{LW} one can deduce the bound
\begin{equation} 
  \forall \psi \in C_0^\infty(\Real^2) \,, \qquad
  \int_{\Real^2} |\nabla_{\!A} \psi(x)|^2 \, \der x
  \geq \tilde{c}_{B} \int_{\Real^2} 
  \frac{|\psi(x)|^2}{1+|x|^2}
  \, \der x
  \,,
\end{equation}
where~$\tilde{c}_{B}$ is a constant,
which is positive if and only if 
the total magnetic flux $\lim_{|x|\to\infty}\Phi_B(x)$
is not an integer.
See also~\cite{Weidl_1999} for other types of magnetic
Hardy-type inequalities.
As the most recent result, 
the following global Hardy-type inequality 
was derived in~\cite{CK} for \emph{any} smooth~$A$ 
\begin{equation}\label{CK} 
  \forall \psi \in C_0^\infty(\Real^2) \,, \qquad
  \int_{\Real^2} |\nabla_{\!A} \psi(x)|^2 \, \der x
  \geq c_{B} \int_{\Real^2} 
  \frac{|\psi(x)|^2}{1+|x|^2\log^2|x|}
  \, \der x
  \,,
\end{equation}
where~$c_{B}$ is a constant, 
which is positive if and only if $B$~is not identically equal to zero.
The constant~$c_B$ is given in terms of the first Neumann eigenvalue of 
the magnetic Laplacian in a disk and it is not clear whether 
a bound of the type~\eqref{CK} can actually provide a useful sufficient 
condition to guarantee~\eqref{Ass2}.
In any case, such a condition clearly forces~$B$ 
to decay sufficiently fast at infinity. 
\end{Remark}

Our strategy to establish Theorem~\ref{Thm} is based on 
the \emph{method of multipliers} 
developed in the spectral context
for Schr\"odinger operators in three and higher dimensions 
in our preceding paper~\cite{FKV}. 
The restriction to the higher dimensions in~\cite{FKV}
was caused by the essential usage of the classical Hardy inequality
in the proof.
The present paper is enabled by the observation that 
while the classical Hardy inequality 
is valid in three and higher dimensions only,
it can be effectively replaced by the aforementioned
magnetic Hardy-type inequalities in two dimensions
and still yields a result in the context of multipliers. 
(We are grateful to Timo Weidl for the initial impetus
to think about this extension.)
Since the proof of Theorem~\ref{Thm} requires some important
modifications of the original ideas of~\cite{FKV}
and condition~\eqref{Ass2} differs from the form 
of sufficient conditions established in~\cite{FKV},
we have decided to present this two-dimensional result to the community.

The organisation of this paper is as follows.
Theorem~\ref{Thm} is proved in Section~\ref{Sec.proof},
where we simultaneously present its higher-dimensional analogue
(Theorem~\ref{Thm.multi}).
In Section~\ref{Sec.AB} we discuss the possibility
of extending the present results to complex-valued electric potentials;
in addition to the case of general regular fields
(Theorems~\ref{Thm.nsa1} and~\ref{Thm.robust}),
we establish results for the singular Aharonov-Bohm potential 
(Theorem~\ref{Thm.AB}). 
	
\section{The proof in any dimension}\label{Sec.proof}
%
We proceed in any dimension $d \geq 1$.
At the same time, following~\cite{FKV}, 
we also allow the electric potential~$V$ to be complex-valued.
Physically, the imaginary part of~$V$ can be interpreted
as an energy gain/loss in an open quantum system.
 
Let us therefore assume
$A \in \sii_\mathrm{loc}(\Real^d;\Real^d)$
and  
$V \in L^1_\mathrm{loc}(\Real^d;\Com)$.
In any dimension, the physically relevant quantity
is the $d$-covariant \emph{magnetic tensor}
$$
  B^* := \nabla A - (\nabla A)^T  
$$
and we assume $B^* \in \sii_\mathrm{loc}(\Real^d;\Real^{d \times d})$.
If $d=1$ this quantity is always equal to zero,
so it is reasonable to exclude the one-dimensional situation
from further considerations, but formally it is covered in the following	.
The \emph{magnetic field}~$B$ is the $(d-2)$-contravariant tensor
obtained from~$B^*$ as its Hodge dual. 
We therefore arrive at the scalar field~\eqref{Hodge.2D} for $d=2$ 
and at the usual vector field $B = \rot A$ for $d=3$.
We refer to~\cite{CK} for more details on the formalism
of the magnetic field in any dimension.

Let us consider the quadratic form
\begin{equation}\label{form} 
  h_{A,V}[\psi] := \int |\nabla_{\!A}\psi|^2
  + \int  V |\psi|^2
  \,, \qquad
  \Dom(h_{A,V}) := \overline{C_0^\infty(\Real^d)}^{\vertiii{\cdot}}
  \,,
\end{equation}
where 
\begin{equation}\label{Aspace}
  \vertiii{\psi}^2 := \int |\nabla_{\!A}\psi|^2
  + \int \Re V_+ \, |\psi|^2 + \int |\psi|^2
  \,.
\end{equation}
Here and in the sequel we abbreviate $\int := \int_{\Real^d}$
and omit the arguments of integrated functions.
Under the assumption that there exist numbers $a_1, a_2 \in [0,1)$ such that,
for every $\psi \in C_0^\infty(\Real^d)$, 
\begin{equation}\label{Ass1.bis}
  \int \Re V_- \, |\psi|^2  
  \leq a_1^2 \int  |\nabla_{\!A} \psi|^2 
  \qquad\mbox{and}\qquad
  \int |\Im V| \, |\psi|^2  
  \leq a_2^2 \int  |\nabla_{\!A} \psi|^2  
  \,,
\end{equation}
the form $h_{A,V}$ is sectorial and closed.	
Let us denote by~$H_{A,V}$ the m-sectorial operator
associated with~$h_{A,V}$ via the first representation theorem
(\cf~\cite[Thm.~VI.2.1]{Kato}).
We have $\Dom(H_{A,V}) \subset \Dom(h_{A,V}) \subset W^{1,2}(\Real^d)$.
Here the last inclusion employs the diamagnetic inequality
\begin{equation}\label{dia}
  \forall \psi \in W_\mathrm{loc}^{1,2}(\Real^d) 
  \,, \qquad
  \big|\nabla_{\!A}\psi\big| \geq \big|\nabla|\psi|\big|
  \,,
\end{equation}
which we shall frequently use in the sequel.
(If $\Im V = 0$, then~\eqref{Ass1.bis} coincides with~\eqref{Ass1} 
and in this case the operator~$H_{A,V}$ is self-adjoint 
and bounded from below.)

In view of~\eqref{Ass1.bis},
a vertex and a semi-angle of $h_{A,V}$ 
are given by~$0$ and~$\pi/4$, respectively.
Consequently, 
\begin{equation}\label{sector}
  \sigma(H_{A,V}) \subset \{\lambda \in \Com : \ \Re\lambda \geq |\Im\lambda|\}
\end{equation}
and in particular~$H_{A,V}$ has no complex eigenvalue~$\lambda$
with $\Re\lambda < |\Im\lambda|$.
Actually, also the presence of zero eigenvalue
can be excluded by~\eqref{Ass1.bis} with help of~\eqref{dia}. 

Hence, it remains to exclude the existence of eigenvalues of~$H_{A,V}$ 
in the sector on the right-hand side of~\eqref{sector}.
To this purpose, we employ the following crucial lemma.
\begin{Lemma}\label{Lem.crucial}
Let $A \in \sii_\mathrm{loc}(\Real^d;\Real^d)$ 
be such that $B^* \in L_\mathrm{loc}^2(\Real^d,\Real^{d \times d})$.
Suppose that $V \in L^1(\Real^d;\Com)$
admits the decomposition $\Re V = \Re V^{(1)} + \Re V^{(2)}$ 
with $\Re V^{(1)} \in W^{1,1}_\mathrm{loc}(\Real^d)$ 
and $\Re V^{(2)} \in L_\mathrm{loc}^2(\Real^d)$.
Assume also~\eqref{Ass1.bis}. 
Let~$u$ be a solution of $H_{A,V} u = \lambda u$ 
with $\Re\lambda \geq |\Im\lambda|$
satisfying
\begin{equation}\label{crucial.cond}
  \left( 
  r^2 \, |B_\tau^*|^2 + [\partial_r(r \, \Re V^{(1)})]_+
  + (1+r^2) \, |\Re V^{(2)}|^2
  + r^2 \, \Im V  
  + r \, \Re V_-
  + r^{-1}
  \right)
  |u|^2
  \in L^1(\Real^d)
  \,,
\end{equation}
where $B_\tau^* := \frac{x}{|x|} \cdot B^*$.
Then also
$
  \left(
  r \, |\nabla_{\!A} u^-|
  + [\partial_r(r \, \Re V^{(1)})]_-
  + r \, \Re V_+
  \right)
  \in L^1(\Real^d)
$
and the identity 
\begin{eqnarray}\label{crucial}
\lefteqn{
  \int |\nabla_{\!A} u^-|^2
  + \frac{|\Im\lambda|}{(\Re\lambda)^{1/2}}
  \left(
  \int r \, |\nabla_{\!A} u^-|^2
  - \frac{d-1}{2} \int\frac{|u^-|^2}{r}
  + \int r \, \Re V \, |u^-|^2
  \right)
} 
  \nonumber \\
  && = -2 \Im \int r \, B_\tau^* \cdot u^- \overline{\nabla_{\!A}u^-} 
  + \int \partial_r(r \, \Re V^{(1)}) \, |u^-|^2
  + (1-d) \int \Re V^{(2)} \, |u^-|^2
  \nonumber \\
  && \quad
  - 2 \Re \int r \, \Re V^{(2)} \, u^- \overline{\partial_r^{A}u^-} 
  + 2 \Im \int r \, \Im V \, u^- \overline{\partial_r^{A}u^-} 
\end{eqnarray}
holds true
(if $\lambda=0$ then the term multiplied by $|\Im\lambda|/(\Re\lambda)^{1/2}$
is not present), 
where $\partial_r^A f(x) := \frac{x}{|x|} \cdot \nabla_{\!A} f(x)$
and
$$
   u^\pm(x) := e^{\pm i \sgn(\Im\lambda) \;\! (\Re\lambda)^{1/2}|x|} \, u(x)
  \,.
$$
\end{Lemma}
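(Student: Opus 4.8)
The plan is to derive the integral identity~\eqref{crucial} by testing the eigenvalue equation $H_{A,V}u=\lambda u$ against a carefully chosen multiplier, following the method of~\cite{FKV}. The natural multiplier here, adapted to the two-dimensional replacement of the Hardy inequality, is built from the conjugated function $u^-(x) = e^{-i\sgn(\Im\lambda)(\Re\lambda)^{1/2}|x|}u(x)$: a short computation shows that $u^-$ solves a modified equation in which the free-Laplacian part combines with the plane-wave factor to produce, besides $\nabla_{\!A}u^-$ terms, a first-order radial term $\partial_r^{A}u^-$ with coefficient proportional to $\sgn(\Im\lambda)(\Re\lambda)^{1/2}$ and a zeroth-order term $-\tfrac{d-1}{2r}\,\partial_r(\cdots)$-type contribution coming from the Laplacian of $|x|$. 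First I would record this transformed equation for $u^-$ carefully, keeping track of all real and imaginary parts, since the split of $V$ into $\Re V^{(1)}+\Re V^{(2)}+i\,\Im V$ and of the spectral parameter into $\Re\lambda$ and $\Im\lambda$ is what ultimately sorts the terms onto the two sides of~\eqref{crucial}.

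Next I would apply the two Morawetz-type multipliers used in~\cite{FKV}: testing against $u^-$ itself (the "zeroth" multiplier) and against $r\,\partial_r^{A}u^-$, or equivalently against the vector field multiplier $x\cdot\nabla_{\!A}u^-$, and taking real and imaginary parts of the resulting identities. Integration by parts against $r\,\partial_r^A$ produces the bulk terms $\int|\nabla_{\!A}u^-|^2$, $\int r|\nabla_{\!A}u^-|^2$, the boundary-at-origin term $-\tfrac{d-1}{2}\int |u^-|^2/r$, and the potential terms $\int\partial_r(r\,\Re V^{(1)})|u^-|^2$ (after moving the derivative onto $r\,\Re V^{(1)}$), $\int\Re V^{(2)}|u^-|^2$ with the dimensional factor $(1-d)$, and the magnetic commutator term $-2\Im\int r\,B_\tau^*\cdot u^-\overline{\nabla_{\!A}u^-}$ coming from $[\partial_r^A,\nabla_{\!A}]$ acting through the magnetic tensor $B^* = \nabla A - (\nabla A)^T$. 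The cross terms $-2\Re\int r\,\Re V^{(2)}u^-\overline{\partial_r^Au^-}$ and $+2\Im\int r\,\Im V\,u^-\overline{\partial_r^Au^-}$ arise precisely because $\Re V^{(2)}$ is only assumed $L^2_{\mathrm{loc}}$ (so its radial derivative is not available and one keeps a first-order term instead), and because $\Im V$ cannot be integrated by parts against a real multiplier. The coefficient $|\Im\lambda|/(\Re\lambda)^{1/2}$ in front of the parenthesis on the left is exactly the modulus of the linear-in-$\lambda$ term generated by differentiating the plane-wave phase.

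The genuinely delicate part — and where I expect the main obstacle — is the \emph{justification} of all these integrations by parts and of the convergence of the various integrals, i.e. showing that the formal identity is rigorous for a general weak solution $u$ under only the stated $L^1$-weighted hypothesis~\eqref{crucial.cond}. The strategy here is the standard one: introduce a smooth radial cutoff $\chi_R(x)=\chi(x/R)$ together with a regularisation near the origin (a cutoff $\theta_\delta$ removing a ball of radius $\delta$, needed because the multiplier $r\,\partial_r^A$ and the term $r^{-1}$ are singular there, and because $u$ is only $W^{1,2}_{\mathrm{loc}}$), carry out the computation with $\chi_R\theta_\delta u^-$ in place of $u^-$, and then pass to the limits $\delta\to0$ and $R\to\infty$. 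The hypothesis~\eqref{crucial.cond} is tailored to make every term that survives in the limit absolutely integrable, while the error terms generated by $\nabla\chi_R$ and $\nabla\theta_\delta$ must be shown to vanish; for the $R\to\infty$ errors one uses the a priori membership $u\in W^{1,2}(\Real^d)$ (from $\Dom(H_{A,V})\subset W^{1,2}$) together with~\eqref{dia} and~\eqref{crucial.cond}, and for the $\delta\to0$ errors one uses that $r^{-1}|u|^2\in L^1$ near the origin forces the boundary contributions on $\{|x|=\delta\}$ to disappear along a suitable sequence $\delta\to0$. A secondary technical point is the \emph{a posteriori} integrability claim — that $r|\nabla_{\!A}u^-|$, $[\partial_r(r\Re V^{(1)})]_-$ and $r\,\Re V_+$ are then in $L^1$ — which I would read off from the identity once established, since all other terms in~\eqref{crucial} are controlled by~\eqref{crucial.cond}, so the remaining non-negative pieces must be finite as well. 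Handling the low-regularity of $A$ (only $L^2_{\mathrm{loc}}$, with $B^*\in L^2_{\mathrm{loc}}$) requires a preliminary mollification of $A$ and a limiting argument, or alternatively quoting the relevant approximation lemma from~\cite{FKV}.
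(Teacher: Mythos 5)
Your proposal is correct and follows essentially the same route as the paper: the method of multipliers with the constant, weighted--$|x|$ and $|x|^2$--commutator (Morawetz-type) test functions, the conjugation by the plane-wave factor defining $u^-$, and a cutoff/mollification argument whose limits are controlled precisely by~\eqref{crucial.cond}. Whether one conjugates first and then applies the multipliers to $u^-$ (as you do) or applies them to $u$ and rewrites in terms of $u^-$ at the end (as the paper does) is only a reorganisation of the same computation.
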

\begin{proof}
The identity can be derived by following 
the method of multipliers developed in~\cite{FKV}
based on previous ideas 
of~\cite{Ikebe-Saito} and~\cite{Barcelo-Vega-Zubeldia_2013}.
Since the lemma is not explicitly stated in~\cite{FKV},
we sketch the proof. 

The eigenvalue equation $H_{A,V} u = \lambda u$ 
means that $u \in \Dom(H_{A,V})$ and 
\begin{equation}\label{weak}
  \forall v \in \Dom(h_{A,V}) 
  \,, \qquad
  - \int \overline{\nabla_{\!A} v} \, \nabla_{\!A} u 
  + \lambda \int \overline{v} \, u
  = \int \overline{v} \, V u
  \,.
\end{equation}
Let $G_1,G_2,G_3:\Real^d \to \Real$ be three smooth functions.
Choosing $v:=G_1 u$ in~\eqref{weak}, taking the real part of 
the obtained identity and integrating by parts, we obtain
\begin{equation}\label{G1}
  \Re\lambda \int G_1 \, |u|^2
  - \int G_1 \, |\nabla_{\!A}u|^2
  +\frac{1}{2} \int \Delta G_1 \, |u|^2
  = \int G_1 \, \Re V \, |u|^2
  \,.
\end{equation}
Analogously, choosing $v:=G_2 u$ in~\eqref{weak}, taking the imaginary part of 
the obtained identity and integrating by parts, we obtain
\begin{equation}\label{G2}
  \Im\lambda \int G_2 \, |u|^2
  - \Im \int \nabla G_2 \cdot \overline{u} \nabla_{\!A}u
  = \int G_2 \, \Im V \, |u|^2
  \,.
\end{equation}
Finally, choosing 
$
  v := [\Delta_A,G_3]
  = 2 \nabla G_3 \cdot \nabla_{\!A} u + \Delta G_3 \, u
$ 
in~\eqref{weak}
where $\Delta_A := \nabla_{\!A}\cdot\nabla_{\!A}$
is the magnetic Laplacian, 
taking the real part of 
the obtained identity, integrating by parts
and multiplying the result by $-1/2$, we obtain
\begin{multline}\label{G3}
  \int \nabla_{\!A}u \cdot \nabla^2 G_3 \cdot \overline{\nabla_{\!A} u}
  - \frac{1}{4} \int \Delta^2 G_3 \, |u|^2
  + \Im\lambda \ \Im\int \nabla G_3 \cdot u \overline{\nabla_{\!A} u}
  +\Im \int \nabla G_3 \cdot B^* \cdot u \overline{\nabla_{\!A} u}
  \\
  = -\frac{1}{2} \int \Delta G_3 \, \Re V \, |u|^2
  - \Re \int \nabla G_3 \cdot V \, u \overline{\nabla_{\!A} u}
  \,,
\end{multline}
where $\nabla^2 G_3$ denotes the Hessian matrix of~$G_3$
and $\Delta^2 := \Delta\Delta$ is the bi-Laplacian.
Identity~\eqref{crucial} is obtained by combining 
\eqref{G1}--\eqref{G3} with special choices of the multipliers:
\begin{multline}
  \big[\mbox{\eqref{G1} with } G_1(x) := 1\big] 
  + \big[\mbox{\eqref{G2} with } G_2(x) := 2 \, \Re\lambda \, \sgn(\Im\lambda) \, |x| \big]
  + \big[\mbox{\eqref{G3} with } G_3(x) := |x|^2\big]
  \\
  - \left[\mbox{\eqref{G1} with } G_1(x) 
  := \frac{|\Im\lambda|}{(\Re\lambda)^{1/2}} \, |x|\right]
  \,.
\end{multline}
(If $\lambda=0$ then the last subtraction is not performed.)
Here the main idea is to replace~$u$ by~$u^-$ using the identities 
$$
  \big|\nabla_{\!A} u^-(x)\big| = 
  \left|
  \nabla_{\!A} u(x) 
  - i \, (\Re\lambda)^{1/2} \, \sgn(\Im\lambda) \, \frac{x}{|x|} \, u(x)
  \right|
  \qquad \mbox{and} \qquad
  B_\tau^* \cdot \overline{u} \nabla_{\!A} u
  =  B_\tau^* \cdot \overline{u^-} \nabla_{\!A} u^-
  \,,
$$
where the latter employs the fact that $B_\tau^*$ is tangential,
\ie\ $x \cdot B_\tau^*(x) = 0$.

Up to now, the procedure explained above has been purely formal,
because we \emph{a priori} do not know that the individual integrals converge.
To make it rigorous, one can follow~\cite{FKV} and replace~$u$
by approximating solutions by using a standard cutoff and mollification argument. 
In this way, one arrives at an approximating version of~\eqref{crucial} 
and the desired identity is obtained after passing to the limit
in the cutoff and mollification parameters,
by employing the convergence of a set of integrals 
expressed by~\eqref{crucial.cond}.
\end{proof}

Now let us come back to the initial hypothesis that~$V$ is real-valued.
Then~$H_{A,V}$ is self-adjoint, necessarily $\Im\lambda=0$ 
and it remains to exclude the existence of non-negative eigenvalues. 
In this case, \eqref{crucial} reduces to
\begin{eqnarray}\label{crucial.ss}
  \int |\nabla_{\!A} u^-|^2
  &=& -2 \Im \int r \, B_\tau^* \cdot u^- \overline{\nabla_{\!A}u^-} 
  + \int \partial_r(r \, V^{(1)}) \, |u^-|^2
  \nonumber \\
  && 
  + (1-d) \int V^{(2)} \, |u^-|^2
  - 2 \Re \int r \, V^{(2)} \, u^- \overline{\partial_r^{A}u^-}
  \,. 
\end{eqnarray}
Using the Schwarz inequality, we have
\begin{eqnarray*}
  \int |\nabla_{\!A} u^-|^2
  &\leq& 2 \, \sqrt{\int r^2 \, |B_\tau^*|^2 \, |u^-|^2 } \, 
  \sqrt{\int |\nabla_{\!A}u^-|^2} 
  + \int [\partial_r(r \, V^{(1)})]_+ \, |u^-|^2
  \nonumber \\
  && 
  + |1-d| \int |V^{(2)}| \, |u^-|^2
  + 2 \, \sqrt{\int r^2 \, |V^{(2)}|^2 \, |u^-|^2}
  \, \sqrt{\int |\nabla_{\!A}u^-|^2}
  \\
  &\leq&
  (b_1 +b_2^2 + (d-1) \, b_3^2 + b_4) \int |\nabla_{\!A}u^-|^2
  \,, 
\end{eqnarray*}
where the last inequality follows by conditions
\begin{equation}\label{Ass2.bis}
\begin{aligned}
  \int 4 \, r^2 \, |B_\tau^*|^2 \, |\psi|^2 
  &\leq b_1^2 \int |\nabla_{\!A} \psi|^2 \,,
  &&&
  \int [\partial_r(r \, V^{(1)})]_+ \, |\psi|^2
  &\leq b_2^2 \int |\nabla_{\!A} \psi|^2 \,,
  \\
  \int |V^{(2)}| \, |\psi|^2
  &\leq b_3^2 \int |\nabla_{\!A} \psi|^2 \,,
  &&&
  \int 4 \, r^2 \, |V^{(2)}|^2 \, |\psi|^2
  &\leq b_4^2 \int |\nabla_{\!A} \psi|^2 \,,
\end{aligned}
\end{equation}
assumed to be valid for every $\psi \in C_0^\infty(\Real^d)$.
Making the hypothesis
\begin{equation}\label{Ass.b.bis}
  b_1 +b_2^2 + (d-1) \, b_3^2 + b_4 < 1
  \,,
\end{equation}
we conclude with $\nabla_{\!A} u^- = 0$.
By the diamagnetic inequality~\eqref{dia}, 
it follows that $u^- = 0$ and thus $u=0$.
Consequently, the eigenvalue equation $H_{A,V} u =\lambda u$
for $\lambda \geq 0$ admits only trivial solutions.   
It concludes the proof that the point spectrum of $H_{A,V}$ is empty.

Let us summarise the multidimensional result
into the following theorem.

\begin{Theorem}\label{Thm.multi}
Let $A \in L_\mathrm{loc}^2(\Real^d;\Real^d)$ 
be such that $B^* \in L_\mathrm{loc}^2(\Real^d;\Real^{d \times d})$.
Suppose that $V \in L^1(\Real^d;\Real)$
admits the decomposition $V = V^{(1)} + V^{(2)}$ 
where $V^{(1)} \in W^{1,1}_\mathrm{loc}(\Real^d)$ 
and $V^{(2)} \in L_\mathrm{loc}^2(\Real^d)$.
Assume that there exist numbers $a,b_1,b_2,b_3,b_4 \in [0,1)$ 
satisfying~\eqref{Ass.b.bis}
such that~\eqref{Ass1.bis} and~\eqref{Ass2.bis} hold. 
Then $H_{A,V}$ has no eigenvalues, \ie\
$\sigma_\mathrm{p}(H_{A,V}) = \varnothing$.
\end{Theorem}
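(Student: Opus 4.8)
The plan is to derive Theorem~\ref{Thm.multi} as the multidimensional counterpart of Theorem~\ref{Thm} by the same reasoning that was already carried out in the text immediately preceding the statement. First I would recall that under~\eqref{Ass1.bis} the operator $H_{A,V}$ is self-adjoint and bounded from below, and that~\eqref{sector} (which here collapses to $\sigma(H_{A,V})\subset[0,\infty)$ since $\Im V=0$) together with the diamagnetic inequality~\eqref{dia} already rules out any eigenvalue $\lambda\le 0$, including $\lambda=0$. Thus the whole content of the theorem is the exclusion of eigenvalues $\lambda>0$ (and more precisely $\lambda\ge0$, with the boundary case handled as just indicated).

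The core step is to apply Lemma~\ref{Lem.crucial}. Let $u$ be a putative eigenfunction, $H_{A,V}u=\lambda u$ with $\lambda\ge0$. I would first check that the integrability hypothesis~\eqref{crucial.cond} is met: since $V\in L^1(\Real^d)$ admits the stated decomposition, $B^*\in L^2_{\mathrm{loc}}$, $u\in\Dom(H_{A,V})\subset W^{1,2}(\Real^d)$, and the subordination bounds~\eqref{Ass1.bis}, \eqref{Ass2.bis} control exactly the weighted quantities appearing in~\eqref{crucial.cond} in terms of $\|\nabla_{\!A}u\|^2<\infty$, condition~\eqref{crucial.cond} follows (with the radial-weight term $r^{-1}|u|^2$ handled via the same Hardy-type argument already used in~\cite{FKV}, or simply accepted as part of Lemma~\ref{Lem.crucial}'s hypotheses being verifiable here). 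Lemma~\ref{Lem.crucial} then yields identity~\eqref{crucial}, which in the self-adjoint case $\Im\lambda=0$ reduces to~\eqref{crucial.ss}. At this point I would reproduce verbatim the Schwarz-inequality estimate displayed in the text: bounding the four terms on the right of~\eqref{crucial.ss} by $2\sqrt{\int r^2|B_\tau^*|^2|u^-|^2}\,\sqrt{\int|\nabla_{\!A}u^-|^2}$, $\int[\partial_r(rV^{(1)})]_+|u^-|^2$, $|1-d|\int|V^{(2)}||u^-|^2$, and $2\sqrt{\int r^2|V^{(2)}|^2|u^-|^2}\,\sqrt{\int|\nabla_{\!A}u^-|^2}$ respectively, then invoking~\eqref{Ass2.bis} applied to $\psi=u^-$ (justified by density, since $u^-\in W^{1,2}$ and the subordination inequalities extend from $C_0^\infty$ to the form domain) to conclude
\[
  \int|\nabla_{\!A}u^-|^2 \le \bigl(b_1+b_2^2+(d-1)b_3^2+b_4\bigr)\int|\nabla_{\!A}u^-|^2 .
\]
Hypothesis~\eqref{Ass.b.bis} forces the bracket to be $<1$, hence $\nabla_{\!A}u^-=0$; the diamagnetic inequality gives $\nabla|u^-|=0$, so $|u^-|=|u|$ is constant, and being in $L^2(\Real^d)$ it must vanish. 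Therefore $u=0$, no eigenvalue exists, and $\sigma_{\mathrm p}(H_{A,V})=\varnothing$. Theorem~\ref{Thm} is the special case $d=2$, where $B^*$ is dual to the scalar $B$, $|B_\tau^*|=|B|$, and $(d-1)b_3^2=b_3^2$, recovering~\eqref{Ass.b}.

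The main obstacle — really the only nontrivial point, since Lemma~\ref{Lem.crucial} does the heavy lifting — is the passage from the formal identity to a rigorous one, i.e.\ verifying~\eqref{crucial.cond} and the attendant claim that $(r|\nabla_{\!A}u^-|+[\partial_r(rV^{(1)})]_-+r\,V_+)\in L^1$. This requires the cutoff-and-mollification approximation argument of~\cite{FKV} (sketched in the proof of Lemma~\ref{Lem.crucial}), controlling boundary terms at the origin and at infinity; the origin is delicate because of the weight $r^{-1}$, which in dimension $d=2$ is precisely where the classical Hardy inequality fails and the magnetic Hardy-type inequalities of the Remark would be needed if one wanted a clean a priori bound — but in the present formulation this is absorbed into the hypotheses of Lemma~\ref{Lem.crucial} and the finiteness of $\|\nabla_{\!A}u\|$. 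Once~\eqref{crucial.ss} is in hand the remainder is the short Schwarz/absorption computation above and costs nothing.
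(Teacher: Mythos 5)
Your proposal is correct and follows essentially the same route as the paper: the paper's proof of Theorem~\ref{Thm.multi} is precisely the argument displayed just before its statement, namely applying Lemma~\ref{Lem.crucial}, reducing to~\eqref{crucial.ss} in the self-adjoint case, absorbing the right-hand side via the Schwarz inequality together with~\eqref{Ass2.bis} and~\eqref{Ass.b.bis}, and concluding $u=0$ from $\nabla_{\!A}u^-=0$ and the diamagnetic inequality. Your added remarks on verifying~\eqref{crucial.cond} address a point the paper itself leaves to the cutoff-and-mollification argument of~\cite{FKV} inside the proof of Lemma~\ref{Lem.crucial}, so nothing essential is changed.
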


Theorem~\ref{Thm} is a special case for $d=2$.
Notice that $|B_\tau^*| = |B|$ if $d=2$.

\section{Extensions to complex-valued electric potentials}\label{Sec.AB}
%
After establishing the crucial identity of Lemma~\ref{Lem.crucial},
one of the next steps of~\cite{FKV} to deal with it
was to use a weighted Hardy inequality 
\begin{equation}\label{eq:hardyweight}
  \forall\psi\in C^\infty_0(\mathbb R^d) \,, \qquad
  \int r \, |\nabla \psi|^2 \geq \frac{(d-1)^2}{4}
  \int\frac{|\psi|^2}{r}
\end{equation}
together with the diamagnetic inequality~\eqref{dia}
and to replace the first two terms in the round brackets 
on the left-hand side of~\eqref{crucial} by the lower bound
\begin{equation}\label{problem}
  \int r \, |\nabla_{\!A} u^-|^2
  - \frac{d-1}{2} \int\frac{|u^-|^2}{r}
  \geq 
  \frac{d-3}{d-1} \int r \, |\nabla_{\!A} u^-|^2 
  \,.
\end{equation}
Although~\eqref{eq:hardyweight} is valid also for $d=2$,
the lower bound~\eqref{problem} is \emph{negative}, 
which spoils the subsequent argument
(\cf~the procedure above Theorem~\ref{Thm.multi}).
This is the reason why it is not immediate, in two dimensions, 
to use formula~\eqref{crucial}
for ensuring the absence of eigenvalues 
with \emph{non-zero imaginary part}
(unless~$H_{A,V}$ is self-adjoint).
Notice, however, 
that a condition excluding real eigenvalues even if~$H_{A,V}$
is non-self-adjoint is easy to obtain in the same way as above, 
because then the troublesome term represented by the round brackets 
on the left-hand side of~\eqref{crucial} is not present.

Here we present several alternative ways how to use~\eqref{crucial}
in order to guarantee the total absence of eigenvalues 
even if~$H_{A,V}$ is not self-adjoint. 
Since the alternative approaches are not needed in higher dimensions,
in this section we again restrict to the two-dimensional situation.

First of all, we rewrite~\eqref{crucial} as follows ($d=2$): 
\begin{eqnarray}\label{crucial.bis}
\lefteqn{
  \int |\nabla_{\!A} u^-|^2
  + \frac{|\Im\lambda|}{(\Re\lambda)^{1/2}}
  \left(
  \int r \, |\nabla_{\!A} u^-|^2
  - \frac{1}{2} \int\frac{|u^-|^2}{r}
  + \int r \, \Re V_+ \, |u^-|^2
  \right)
} 
  \nonumber \\
  && = -2 \Im \int r \, B_\tau^* \cdot u^- \overline{\nabla_{\!A}u^-} 
  + \int \partial_r(r \, \Re V^{(1)}) \, |u^-|^2
  - \int \Re V^{(2)} \, |u^-|^2
  \nonumber \\
  && \quad
  - 2 \Re \int r \, \Re V^{(2)} \, u^- \overline{\partial_r^{A}u^-} 
  + 2 \Im \int r \, \Im V \, u^- \overline{\partial_r^{A}u^-} 
  + \frac{|\Im\lambda|}{(\Re\lambda)^{1/2}} \int r \, \Re V_- \, |u^-|^2
  \,,
\end{eqnarray}
\ie~we keep on the left-hand side just the positive part of~$\Re V$.
As above, the terms multiplied by $|\Im\lambda|/(\Re\lambda)^{1/2}$
are not present if $\lambda=0$.
More generally, if $\lambda=0$ or $\Im\lambda=0$,
the identity~\eqref{crucial.bis} coincides with~\eqref{crucial.ss}
and the sufficient condition of Theorem~\ref{Thm} already
implies the absence of such eigenvalues.
Below we derive worth sufficient conditions to cover 
the case of complex eigenvalues as well.

\subsection{Employing the positivity of 
the real part of the electric field}\label{Sec.app1}
An obvious condition to make the round brackets 
on the left-hand side of~\eqref{crucial.bis} non-negative
is to require
\begin{equation}\label{obvious}
  \frac{1}{2} \int\frac{|\psi|^2}{r}
  \leq 
  \int r \, |\nabla_{\!A} \psi|^2
  + \int r \, \Re V_+ \, |\psi|^2 
\end{equation}
for every $\psi \in C_0^\infty(\Real^d)$.
Recalling~\eqref{eq:hardyweight},
it can be satisfied provided that
we require for instance the pointwise bound
\begin{equation}\label{pointwise}
  \Re V_+ \geq \frac{1}{4r^2}
  \,.
\end{equation}

Having ensured the non-negativity of the round brackets 
on the left-hand side of~\eqref{crucial.bis},
the terms on the right-hand side can be handled as above.
It is only important to comment on how to get rid of
the energy dependent fraction at the 
last term on the right-hand side of~\eqref{crucial.bis}.	
We proceed as follows:
\begin{eqnarray}\label{proceed}
  \frac{|\Im\lambda|}{(\Re\lambda)^{1/2}} \int r \, \Re V_- \, |u^-|^2
  &\leq& 
  \frac{|\Im\lambda|}{(\Re\lambda)^{1/2}}
  \sqrt{\int r^2 \, |\Re V_-|^2 \, |u^-|^2}
  \, \sqrt{\int |u^-|^2}
  \nonumber \\
  &\leq& 
  \frac{|\Im\lambda|^{1/2}}{(\Re\lambda)^{1/2}}
  \sqrt{\int r^2 \, |\Re V_-|^2 \, |u^-|^2}
  \, \sqrt{\int |\Im V| |u^-|^2}
  \nonumber \\
  &\leq& 
  \sqrt{\int r^2 \, |\Re V_-|^2 \, |u^-|^2}
  \, \sqrt{\int |\Im V| \, |u^-|^2}
  \,,
\end{eqnarray}
where the first estimate is due the Schwarz inequality,
the second estimate follows from~\eqref{G2} 
with a constant choice for the multiplier~$G_2$
and the last estimate is implied by the restriction
to the sector~\eqref{sector}.	
Consequently, from~\eqref{crucial.bis}
we deduce the bound 
\begin{eqnarray*}
  \int |\nabla_{\!A} u^-|^2
  &\leq& 2 \, \sqrt{\int r^2 \, |B|^2 \, |u^-|^2 } \, 
  \sqrt{\int |\nabla_{\!A}u^-|^2} 
  + \int [\partial_r(r \, \Re V^{(1)})]_+ \, |u^-|^2
  \nonumber \\
  && 
  + \int |\Re V^{(2)}| \, |u^-|^2
  + 2 \, \sqrt{\int r^2 \, |\Re V^{(2)}|^2 \, |u^-|^2}
  \, \sqrt{\int |\nabla_{\!A}u^-|^2}
  \\
  && 
  + 2 \, \sqrt{\int r^2 \, |\Im V|^2 \, |u^-|^2}
  \, \sqrt{\int |\nabla_{\!A}u^-|^2}
  + \sqrt{\int r^2 \, |\Re V_-|^2 \, |u^-|^2}
  \, \sqrt{\int |\Im V| \, |u^-|^2}
  \,.
  \\
  &\leq&
  (b_1 +b_2^2 + b_3^2 + b_4 + b_5 + b_6 a_2) 
  \int |\nabla_{\!A}u^-|^2
  \,, 
\end{eqnarray*}
where the last inequality follows by~\eqref{Ass1.bis}  
and conditions
\begin{equation}\label{Ass2.nsa}
\begin{aligned}
  \int 4 \, r^2 \, |B|^2 \, |\psi|^2 
  &\leq b_1^2 \int |\nabla_{\!A} \psi|^2 \,,
  &&&
  \int [\partial_r(r \, \Re V^{(1)})]_+ \, |\psi|^2
  &\leq b_2^2 \int |\nabla_{\!A} \psi|^2 \,,
  \\
  \int |\Re V^{(2)}| \, |\psi|^2
  &\leq b_3^2 \int |\nabla_{\!A} \psi|^2 \,,
  &&&
  \int 4 \, r^2 \, |\Re V^{(2)}|^2 \, |\psi|^2
  &\leq b_4^2 \int |\nabla_{\!A} \psi|^2 \,,
  \\
  \int 4 \, r^2 \, |\Im V| \, |\psi|^2
  &\leq b_5^2 \int |\nabla_{\!A} \psi|^2 \,,
  &&&
  \int r^2 \, |\Re V_-|^2 \, |\psi|^2
  &\leq b_6^2 \int |\nabla_{\!A} \psi|^2 \,,
\end{aligned}
\end{equation}
assumed to be valid for every $\psi \in C_0^\infty(\Real^d)$.
Making the hypothesis
\begin{equation}\label{Ass.b.nsa}
  b_1 +b_2^2 + b_3^2 + b_4 + b_5 + b_6 a_2 < 1
  \,,
\end{equation}
we conclude with $u=0$ as above.

We summarise the result of this subsection in the following theorem.
\begin{Theorem}\label{Thm.nsa1}
Let $A \in L_\mathrm{loc}^2(\Real^2;\Real^2)$ 
be such that $B \in L_\mathrm{loc}^2(\Real^2;\Real)$.
Suppose that $V \in L^1(\Real^2;\Com)$
admits the decomposition $\Re V = \Re V^{(1)} + \Re V^{(2)}$ 
where $\Re V^{(1)} \in W^{1,1}_\mathrm{loc}(\Real^d)$ 
and $\Re V^{(2)} \in L_\mathrm{loc}^2(\Real^d)$.
Assume that there exist numbers $a_1,a_2,b_1,b_2,b_3,b_4,b_5,b_6 \in [0,1)$ 
satisfying~\eqref{Ass.b.nsa}
such that~\eqref{Ass1.bis} and~\eqref{Ass2.nsa} hold. 
Moreover, assume~\eqref{obvious}.
Then $H_{A,V}$ has no eigenvalues, \ie\
$\sigma_\mathrm{p}(H_{A,V}) = \varnothing$.
\end{Theorem}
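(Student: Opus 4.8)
The plan is to establish Theorem~\ref{Thm.nsa1} by directly following the scheme described in Section~\ref{Sec.app1}, using the crucial identity~\eqref{crucial.bis} of Lemma~\ref{Lem.crucial} as the starting point. First I would observe that under the assumptions~\eqref{Ass1.bis}, the form $h_{A,V}$ is sectorial and closed, so $H_{A,V}$ is m-sectorial, its spectrum lies in the sector~\eqref{sector}, and zero cannot be an eigenvalue; thus only eigenvalues $\lambda$ with $\Re\lambda \geq |\Im\lambda|$ and $\lambda \neq 0$ remain to be excluded. Given such a $\lambda$ and an eigenfunction $u$, the second step is to verify the integrability hypothesis~\eqref{crucial.cond} so that Lemma~\ref{Lem.crucial} applies; this is where the conditions $V \in L^1$, the decomposition $\Re V = \Re V^{(1)}+\Re V^{(2)}$ with the stated local regularity, and the subordination bounds~\eqref{Ass1.bis}, \eqref{Ass2.nsa} together with $u \in W^{1,2}$ are used to control each term, exactly as in~\cite{FKV}.

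With~\eqref{crucial.bis} in hand, the third step is to use the extra hypothesis~\eqref{obvious}, applied to $\psi = u^-$ (after the usual density/approximation argument, or by first verifying $u^- \in \Dom(h_{A,V})$), to conclude that the round-bracket expression on the left-hand side of~\eqref{crucial.bis} is non-negative. Since $|\Im\lambda|/(\Re\lambda)^{1/2} \geq 0$, this lets me drop that entire term and retain only the inequality $\int |\nabla_{\!A} u^-|^2 \leq (\text{right-hand side of~\eqref{crucial.bis}})$. The fourth step is to estimate each of the six terms on the right-hand side. Five of them are handled by Cauchy--Schwarz in the form $|\int r\, F \cdot u^- \overline{\nabla_{\!A} u^-}| \leq (\int r^2 |F|^2 |u^-|^2)^{1/2} (\int |\nabla_{\!A} u^-|^2)^{1/2}$ and by pointwise bounds $\partial_r(r\Re V^{(1)}) \leq [\partial_r(r\Re V^{(1)})]_+$, $|\Re V^{(2)}| \leq |\Re V^{(2)}|$, together with~\eqref{Ass2.nsa}; these produce, respectively, the coefficients $b_1$, $b_2^2$, $b_3^2$, $b_4$, $b_5$ times $\int |\nabla_{\!A} u^-|^2$. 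The sixth term, $\frac{|\Im\lambda|}{(\Re\lambda)^{1/2}} \int r\,\Re V_- |u^-|^2$, is the delicate one and is dealt with by the three-step chain~\eqref{proceed}: Schwarz in the weight $r$, then the identity~\eqref{G2} with constant multiplier $G_2 \equiv 1$ to trade one factor of $|\Im\lambda|$ for $\int |\Im V| |u^-|^2$, then the sector condition $|\Im\lambda| \leq \Re\lambda$ to absorb the remaining energy fraction; finally Cauchy--Schwarz against~\eqref{Ass1.bis} (the $a_2$ bound) and the last inequality of~\eqref{Ass2.nsa} (the $b_6$ bound) give the coefficient $b_6 a_2 \int |\nabla_{\!A} u^-|^2$.

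Summing these six contributions yields $\int |\nabla_{\!A} u^-|^2 \leq (b_1 + b_2^2 + b_3^2 + b_4 + b_5 + b_6 a_2) \int |\nabla_{\!A} u^-|^2$, and the hypothesis~\eqref{Ass.b.nsa} forces $\nabla_{\!A} u^- = 0$. The final step is to invoke the diamagnetic inequality~\eqref{dia}: $|\nabla|u^-|| \leq |\nabla_{\!A} u^-| = 0$ implies $|u^-|$ is constant, hence $|u^-| \in L^2(\Real^2)$ forces $u^- = 0$, and therefore $u = 0$, contradicting that $u$ is an eigenfunction. This exhausts all candidate eigenvalues and proves $\sigma_\mathrm{p}(H_{A,V}) = \varnothing$.

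I expect the main obstacle to be purely technical: making the formal manipulations rigorous. Specifically, one must justify that $u^-$ is an admissible test function in~\eqref{obvious} and in the Cauchy--Schwarz estimates — $u^-$ is not a priori in $C_0^\infty$ — which requires the same cutoff-and-mollification approximation scheme already used in the proof of Lemma~\ref{Lem.crucial}, combined with the integrability consequences stated there (namely $r|\nabla_{\!A} u^-| \in L^1$, etc.) to pass to the limit. The chain~\eqref{proceed} also needs the application of~\eqref{G2} to be legitimate, i.e. the relevant integrals must converge, which again follows from~\eqref{crucial.cond}. Once these approximation issues are dispatched exactly as in~\cite{FKV}, the remaining algebra is routine.
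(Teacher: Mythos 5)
Your proposal is correct and follows essentially the same route as the paper's own argument in Section~\ref{Sec.app1}: start from~\eqref{crucial.bis}, use~\eqref{obvious} to drop the non-negative round bracket, estimate the right-hand side via Cauchy--Schwarz and~\eqref{Ass2.nsa}, handle the $\Re V_-$ term by the chain~\eqref{proceed}, and conclude $u=0$ from~\eqref{Ass.b.nsa} and the diamagnetic inequality. The technical caveats you flag (admissibility of $u^-$, convergence of the integrals, the cutoff-and-mollification scheme from~\cite{FKV}) are exactly the ones the paper defers to Lemma~\ref{Lem.crucial} and~\cite{FKV}.
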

\begin{Remark}
Alternatively, keeping the last term 
from the right-hand side of~\eqref{crucial.bis}
in the round brackets on left-hand side,
one can require the condition 
\begin{equation*}
  \frac{1}{2} \int\frac{|\psi|^2}{r}
  + \int r \, \Re V_- \, |\psi|^2 
  \leq 
  \int r \, |\nabla_{\!A} \psi|^2
  + \int r \, \Re V_+ \, |\psi|^2 
\end{equation*}
instead of~\eqref{obvious}.
Recalling~\eqref{eq:hardyweight},
it can be satisfied provided that
we require for instance the pointwise bound
\begin{equation*}
  \Re V \geq \frac{1}{4r^2} 
  \,.
\end{equation*}
Then the conditions on the last line of~\eqref{Ass2.nsa}
can be ignored and one can take $b_5,b_6=0$ in~\eqref{Ass.b.nsa}.
\end{Remark}

\subsection{A more robust approach}\label{Sec.app2}
Denote by
$D_R:=\{x\in\Real^2: |x| < R\}$ the open disk of radius $R>0$.
The main ingredient of this subsection is the following
Hardy-Poincar\'e-type inequality.
\begin{Lemma}\label{Lem.HP}
One has
\begin{equation}\label{HP}
  \forall \psi \in W_0^{1,2}(D_R)
  \,, \qquad
  \int_{D_R} |\nabla\psi(x)|^2 \, \der x
  \geq \frac{1}{4 R}
  \int_{D_R} \frac{|\psi(x)|^2}{|x|} \, \der x
  \,.
\end{equation}
\end{Lemma}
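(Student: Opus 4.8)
The plan is to prove the Hardy--Poincar\'e-type inequality~\eqref{HP} by a one-dimensional argument in the radial variable, exploiting that the singular weight $1/|x|$ is integrable in two dimensions. First I would reduce to smooth functions: by density it suffices to establish~\eqref{HP} for $\psi \in C_0^\infty(D_R)$. Passing to polar coordinates $x = (\rho\cos\theta,\rho\sin\theta)$ with $\rho \in (0,R)$, the left-hand side dominates the purely radial part $\int_0^{2\pi}\!\!\int_0^R |\partial_\rho\psi|^2 \, \rho \,\der\rho\,\der\theta$, while the right-hand side is $\frac{1}{4R}\int_0^{2\pi}\!\!\int_0^R |\psi|^2 \, \der\rho\,\der\theta$ (the factor $\rho$ in the area element cancels the $1/|x|$ in the weight). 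So it is enough to prove, for each fixed $\theta$ and $f(\rho):=\psi(\rho,\theta)$ with $f(R)=0$,
\begin{equation*}
  \int_0^R |f'(\rho)|^2 \, \rho \,\der\rho \geq \frac{1}{4R} \int_0^R |f(\rho)|^2 \,\der\rho
  \,.
\end{equation*}

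The main step is this scalar weighted inequality. I would prove it by the standard substitution/integration-by-parts (Birman--Hardy) trick: write $|f(\rho)|^2 = -\int_\rho^R \frac{\der}{\der s}|f(s)|^2\,\der s = -2\Re\int_\rho^R f(s)\overline{f'(s)}\,\der s$, integrate in $\rho$ over $(0,R)$, swap the order of integration to get $\int_0^R |f|^2 \le 2\int_0^R s\,|f(s)|\,|f'(s)|\,\der s$, and then apply Cauchy--Schwarz in the form $2 s |f||f'| = 2\,(|f|/\sqrt{\text{const}})\cdot(\sqrt{\text{const}}\, s|f'|)$ balanced so that the $|f|^2$ term can be absorbed. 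Concretely, $\int_0^R|f|^2 \le 2\int_0^R (|f|\,s^{1/2})(s^{1/2}|f'|)\,\der s \le \alpha^{-1}\int_0^R s\,|f|^2\,\der s\cdot(\text{sup }s/s?)\ldots$ — the clean way is: $\int_0^R|f|^2\,\der\rho \le 2\left(\int_0^R |f(s)|^2\,\der s\right)^{1/2}\!\!\left(\int_0^R s^2|f'(s)|^2\,\der s\right)^{1/2} \le 2R^{1/2}\left(\int_0^R |f|^2\right)^{1/2}\!\!\left(\int_0^R s|f'|^2\right)^{1/2}$, using $s^2 \le R s$ on $(0,R)$; dividing through by $\left(\int_0^R|f|^2\right)^{1/2}$ and squaring yields exactly $\int_0^R|f|^2 \le 4R\int_0^R s|f'|^2$, which is the claim.

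I would then integrate the scalar inequality over $\theta\in(0,2\pi)$ and recombine with the area element to recover~\eqref{HP}; the discarded angular-derivative contribution $\rho^{-2}|\partial_\theta\psi|^2$ on the left only helps, so dropping it is harmless. One should take a little care that $f$ may not vanish at $\rho = 0$ (only $\psi$ being smooth at the origin is available), but this causes no trouble: no boundary term at $\rho=0$ appears because the integration by parts is anchored at $\rho = R$ where $f(R)=0$, and all integrals are finite by smoothness and compact support away from $\partial D_R$. The one genuinely delicate point — and the place I would be most careful — is the density/approximation step and making sure the polar-coordinate reduction is legitimate for all of $W_0^{1,2}(D_R)$ including the behaviour near $x=0$; but since the final inequality involves only the integrable weight $|x|^{-1}$ and the full Dirichlet energy, a routine mollification-and-cutoff argument together with Fatou/monotone convergence closes the gap, so I do not expect a real obstacle there.
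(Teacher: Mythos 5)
Your proof is correct and takes essentially the same route as the paper's: both reduce to the one-dimensional radial inequality $\int_0^R |f'(\rho)|^2\,\rho\,\der\rho \geq \frac{1}{4R}\int_0^R |f(\rho)|^2\,\der\rho$ for $f(R)=0$ by dropping the angular part of the gradient in polar coordinates, and both prove that scalar inequality via the same integration-by-parts identity (which you derive equivalently by Fubini) followed by Cauchy--Schwarz, differing only in the cosmetic choice of where the factor $s^2\le Rs$ is absorbed. No gaps.
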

\begin{proof}
For every $f \in C^1([0,R])$ such that $f(R)=0$, 
we have
\begin{align*}
  \int_0^R \frac{|f(r)|^2}{r} \, r \, \der r
  &= \int_0^R |f(r)|^2 \, r' \, \der r
  \\
  &= - 2 \int_0^R \Re\big[\overline{f(r)} f'(r)\big] 
  \, r \, \der r
  \\
  &\leq 2 \,
  \sqrt{ \int_0^R |f'(r)|^2 \, r  \, \der r } \,
  \sqrt{ \int_0^R |f(r)|^2 \, r  \, \der r } 
  \\
  &\leq  2 R  \,
  \sqrt{ \int_0^R |f'(r)|^2 \, r  \, \der r } \,
  \sqrt{ \int_0^R \frac{|f(r)|^2}{r} \, r  \, \der r } 
  \,,
\end{align*}
and therefore
$$
  \int_0^R |f'(r)|^2 \, r  \, \der r
  \geq \frac{1}{4 R}
  \int_0^R \frac{|f(r)|^2}{r} \, r  \, \der r
  \,.	
$$
This one-dimensional inequality implies~\eqref{HP}
after expressing the gradient in spherical coordinates
and by neglecting the angular component. 
\end{proof}

Given two positive numbers $R_1 < R_2$,
let $\eta:[0,\infty) \to [0,1]$ be such that $\eta=1$ on $[0,R_1]$,
$\eta=0$ on $[R_2,\infty)$ and $\eta(r)=(R_2-r)/(R_2-R_1)$ 
for $r \in (R_1,R_2)$.
We denote by the same symbol~$\eta$ 
the radial function $\eta \circ r : \Real^2 \to [0,1]$.
Now, writing $u^- = \eta u^- + (1-\eta) u^-$
and using Lemma~\ref{Lem.HP}, 
we estimate the troublesome term of~\eqref{crucial.bis} as follows:
\begin{align*}
  \int \frac{|u^-|^2}{r} 
  &\leq 
  2 \int \frac{|\nabla  u^-|^2}{r}  + 2 \int \frac{|(1-\eta)u^-|^2}{r} 
  \\
  &\leq 
  8 R_2 \int |\nabla(\eta u^-)|^2
  + \frac{2}{R_1} \int |u^-|^2
  \\
  &\leq
  16 R_2 \int |\nabla u^-|^2
  + \frac{16 R_2}{(R_2-R_1)^2} \int |u^-|^2
  + \frac{2}{R_1} \int |u^-|^2 
  \,.
\end{align*}
Choosing $R_2 := \epsilon \, (\Re\lambda)^{1/2}/|\Im\lambda|$
and $R_1 := R_2/2$ with any positive number~$\epsilon$,
we get
\begin{align*}
  \frac{|\Im\lambda|}{(\Re\lambda)^{1/2}}
  \int \frac{|u^-|^2}{r} 
  &\leq 16 \, \epsilon \int |\nabla u^-|^2
  + \frac{68}{\epsilon} \frac{|\Im\lambda|^2}{\Re\lambda}
  \int |u^-|^2 
  \\
  &\leq 16 \, \epsilon \int |\nabla u^-|^2
  + \frac{68}{\epsilon} \frac{|\Im\lambda|}{\Re\lambda}
  \int |\Im V| \, |u^-|^2 
  \\
  &\leq 16 \, \epsilon \int |\nabla u^-|^2
  + \frac{68}{\epsilon}  
  \int |\Im V| \, |u^-|^2 
  \,,
\end{align*}
where the second estimate follows from~\eqref{G2} 
with a constant choice for the multiplier~$G_2$
and the last estimate is implied by the restriction
to the sector~\eqref{sector}.	
Using additionally~\eqref{eq:hardyweight}, 
we thus deduce from~\eqref{crucial.bis} the crucial inequality
\begin{eqnarray}\label{crucial.robust}
\lefteqn{
  (1-4\epsilon) \int |\nabla_{\!A} u^-|^2
  - \frac{17}{\epsilon} \int |\Im V| \, |u^-|^2 
  + \frac{|\Im\lambda|}{(\Re\lambda)^{1/2}}
  \int r \, \Re V_+ \, |u^-|^2  
} 
  \nonumber \\
  && \leq -2 \Im \int r \, B_\tau^* \cdot u^- \overline{\nabla_{\!A}u^-} 
  + \int \partial_r(r \, \Re V^{(1)}) \, |u^-|^2
  - \int \Re V^{(2)} \, |u^-|^2
  \nonumber \\
  && \quad
  - 2 \Re \int r \, \Re V^{(2)} \, u^- \overline{\partial_r^{A}u^-} 
  + 2 \Im \int r \, \Im V \, u^- \overline{\partial_r^{A}u^-} 
  + \frac{|\Im\lambda|}{(\Re\lambda)^{1/2}} \int r \, \Re V_- \, |u^-|^2
  \,.
\end{eqnarray}

Now, putting the second term from the left-hand side 
of~\eqref{crucial.robust} to the right-hand side,
neglecting the last term on the left-hand side  
and treating the terms on the right-hand side as 
in Section~\ref{Sec.app1} (see particularly~\eqref{proceed}),
we get
\begin{eqnarray}\label{robust.estimate}
  (1-4\epsilon) \int |\nabla_{\!A} u^-|^2
  &\leq& 2 \, \sqrt{\int r^2 \, |B|^2 \, |u^-|^2 } \, 
  \sqrt{\int |\nabla_{\!A}u^-|^2} 
  + \int [\partial_r(r \, \Re V^{(1)})]_+ \, |u^-|^2
  \nonumber \\
  && 
  + \int |\Re V^{(2)}| \, |u^-|^2
  + 2 \, \sqrt{\int r^2 \, |\Re V^{(2)}|^2 \, |u^-|^2}
  \, \sqrt{\int |\nabla_{\!A}u^-|^2}
  \nonumber \\
  && 
  + 2 \, \sqrt{\int r^2 \, |\Im V|^2 \, |u^-|^2}
  \, \sqrt{\int |\nabla_{\!A}u^-|^2}
  + \sqrt{\int r^2 \, |\Re V_-|^2 \, |u^-|^2}
  \, \sqrt{\int |\Im V| \, |u^-|^2}
  \nonumber \\
  &&
  + \frac{17}{\epsilon} \int |\Im V| \, |u^-|^2 
  \nonumber \\
  &\leq&
  \left(
  b_1 +b_2^2 + b_3^2 + b_4 + b_5 + b_6 a_2 + \frac{17}{\epsilon} a_2^2
  \right) 
  \int |\nabla_{\!A}u^-|^2
  \,, 
\end{eqnarray}
where the last inequality follows by~\eqref{Ass1.bis}  
and conditions~\eqref{Ass2.nsa}.
Making the hypothesis 
\begin{equation}\label{Ass.robust}
  b_1 +b_2^2 + b_3^2 + b_4 + b_5 + b_6 a_2 
  + \frac{17}{\epsilon} a_2^2 + 4 \epsilon < 1
  \,,
\end{equation}
we therefore conclude with $u = 0$ as above.

We summarise the result of this subsection in the following theorem.
\begin{Theorem}\label{Thm.robust}
Let $A \in L_\mathrm{loc}^2(\Real^2;\Real^2)$ 
be such that $B \in L_\mathrm{loc}^2(\Real^2;\Real)$.
Suppose that $V \in L^1(\Real^2;\Com)$
admits the decomposition $\Re V = \Re V^{(1)} + \Re V^{(2)}$ 
where $\Re V^{(1)} \in W^{1,1}_\mathrm{loc}(\Real^d)$ 
and $\Re V^{(2)} \in L_\mathrm{loc}^2(\Real^d)$.
Assume that there exist numbers 
$\epsilon,a_1,a_2,b_1,b_2,b_3,b_4,b_5,b_6 \in [0,1)$ 
satisfying~\eqref{Ass.b.nsa}
such that~\eqref{Ass1.bis} and~\eqref{Ass2.nsa} hold. 
Then $H_{A,V}$ has no eigenvalues, \ie\
$\sigma_\mathrm{p}(H_{A,V}) = \varnothing$.
\end{Theorem}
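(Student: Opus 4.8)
The plan is to follow the same scheme as in Sections~\ref{Sec.app1} and~\ref{Sec.app2}, using the crucial identity~\eqref{crucial.bis} of Lemma~\ref{Lem.crucial} as the starting point, but now absorbing the troublesome round-bracket term on the left-hand side of~\eqref{crucial.bis} by means of the Hardy--Poincar\'e inequality~\eqref{HP} of Lemma~\ref{Lem.HP}. First I would take a solution $u$ of $H_{A,V}u=\lambda u$ with $\Re\lambda\geq|\Im\lambda|$; if $\lambda=0$ or $\Im\lambda=0$ the conclusion already follows from Theorem~\ref{Thm} applied with the real part of~$V$, so it suffices to treat the genuinely complex case $\Re\lambda\geq|\Im\lambda|>0$. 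One must first check that the integrability hypothesis~\eqref{crucial.cond} is in force; this is where $V\in L^1(\Real^2;\Com)$ together with the subordination bounds~\eqref{Ass1.bis} and~\eqref{Ass2.nsa} (which give control of $|u^-|$ in the relevant weighted spaces via the diamagnetic inequality~\eqref{dia}) should be invoked, exactly as in the passage preceding Theorem~\ref{Thm.multi}.

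Next I would reproduce the cutoff estimate of Section~\ref{Sec.app2} verbatim: split $u^-=\eta u^-+(1-\eta)u^-$ with the piecewise-linear radial cutoff~$\eta$ supported on $D_{R_2}$ and equal to~$1$ on $D_{R_1}$, apply Lemma~\ref{Lem.HP} on the disk $D_{R_2}$ to the piece $\eta u^-$, bound the complementary piece crudely by $\tfrac1{R_1}\int|u^-|^2$, and then choose $R_2:=\epsilon\,(\Re\lambda)^{1/2}/|\Im\lambda|$, $R_1:=R_2/2$. Using~\eqref{G2} with a constant multiplier to convert $\tfrac{|\Im\lambda|^2}{\Re\lambda}\int|u^-|^2$ into $\tfrac{|\Im\lambda|}{\Re\lambda}\int|\Im V|\,|u^-|^2$, and then the sectoriality~\eqref{sector} to drop the remaining factor, yields precisely the inequality
\begin{equation*}
  \frac{|\Im\lambda|}{(\Re\lambda)^{1/2}}\int\frac{|u^-|^2}{r}
  \leq 16\,\epsilon\int|\nabla u^-|^2 + \frac{68}{\epsilon}\int|\Im V|\,|u^-|^2\,,
\end{equation*}
and combining it with the diamagnetic inequality~\eqref{dia} and with~\eqref{eq:hardyweight} gives~\eqref{crucial.robust}. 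The last two steps are then: move the $|\Im V|$ term to the right, discard the nonnegative term $\frac{|\Im\lambda|}{(\Re\lambda)^{1/2}}\int r\,\Re V_+\,|u^-|^2$, estimate the five right-hand side terms of~\eqref{crucial.robust} by Schwarz and~\eqref{proceed} exactly as in Section~\ref{Sec.app1}, and feed in~\eqref{Ass1.bis} and~\eqref{Ass2.nsa} to arrive at~\eqref{robust.estimate}. Under~\eqref{Ass.robust} the bracketed constant is $<1$, forcing $\nabla_{\!A}u^-=0$, hence $u^-=0$ by~\eqref{dia}, hence $u=0$.

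The main obstacle, I expect, is the rigorous justification of identity~\eqref{crucial.bis} for the solution $u$ itself rather than for test functions, i.e.\ verifying~\eqref{crucial.cond}: one needs the eigenfunction and its magnetic gradient to lie in the weighted spaces dictated by the weights $r^2|B|^2$, $[\partial_r(r\,\Re V^{(1)})]_+$, $(1+r^2)|\Re V^{(2)}|^2$, $r^2\,\Im V$, $r\,\Re V_-$ and $r^{-1}$. This requires combining the a priori regularity $u\in\Dom(H_{A,V})\subset W^{1,2}(\Real^2)$, the hypothesis $V\in L^1$, and the relative-boundedness conditions~\eqref{Ass1.bis}, \eqref{Ass2.nsa} together with the cutoff-and-mollification approximation scheme sketched in the proof of Lemma~\ref{Lem.crucial}. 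A secondary (but purely arithmetic) point is that~\eqref{Ass.robust} is solvable: for any fixed admissible $b_i,a_2$ one first picks $\epsilon$ small, which makes $4\epsilon$ small but $17\epsilon^{-1}a_2^2$ large unless $a_2$ is also taken correspondingly small, so~\eqref{Ass.robust} should be read as a joint smallness assumption that is nonvacuous (e.g.\ it holds whenever $a_2$ is small enough and $b_1+b_2^2+b_3^2+b_4+b_5+b_6a_2$ is bounded away from~$1$, by optimising $\epsilon\sim\sqrt{a_2}$). No further genuinely new ideas beyond those of Lemmas~\ref{Lem.crucial} and~\ref{Lem.HP} are needed.
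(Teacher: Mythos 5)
Your proposal follows essentially the same route as the paper's own argument in Section~\ref{Sec.app2}: the cutoff splitting $u^-=\eta u^-+(1-\eta)u^-$ with Lemma~\ref{Lem.HP}, the $\lambda$-dependent choice of $R_1,R_2$, the conversion via~\eqref{G2} and~\eqref{sector}, and the final absorption under~\eqref{Ass.robust}. Your side remarks (that the genuinely new content is only the case $\Im\lambda\neq 0$, that the rigorous verification of~\eqref{crucial.cond} is the delicate point, and that~\eqref{Ass.robust} rather than~\eqref{Ass.b.nsa} is the condition the proof actually needs) are all consistent with the paper.
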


\subsection{The Aharonov-Bohm potential}\label{Sec.app3}
Finally, we consider the special case of 
the singular \emph{Aharonov-Bohm potential}
\begin{equation}\label{AB}
  A(x) := (-\sin\theta,\cos\theta) \, \frac{\alpha(\theta)}{r}
  \,,
\end{equation}
where $(x_1,x_2)=(r\cos\theta,r\sin\theta)$
is the parameterisation via polar coordinates,
$r \in (0,\infty)$, $\theta \in [0,2\pi)$,
and $\alpha : [0,2\pi) \to \Real$ is 
an arbitrary bounded function.
In this case, the magnetic field~$B$ equals zero 
everywhere except for $x=0$. In fact 
\begin{equation}\label{AB.field}
   B = 2\pi \, \overline{\alpha} \, \delta
\end{equation}
in the sense of distributions, 
where~$\delta$ is the Dirac delta function and 
$$
  \overline{\alpha} 
  := \frac{1}{2\pi} \int_0^{2\pi} \alpha(\theta) \, \der\theta
$$
has the physical meaning of the total magnetic flux. 
We notice that~$A$ can be gauged out
whenever~$\overline{\alpha}$ is an integer.
To measure the strength of the Aharonov-Bohm field,
we introduce the distance of the total magnetic flux~$\overline{\alpha}$
to the set of integers
\begin{equation}\label{beta}
  \beta := \dist(\overline{\alpha},\Int)
\end{equation}
and assume $\overline{\alpha} \not\in \Int$,
so that $\beta \in (0,1/2]$.

Since $A \not\in \sii_\mathrm{loc}(\Real^2;\Real^2)$,
the Aharonov-Bohm potential does not satisfy 
our standing regularity assumption.
In the case~\eqref{AB}, we still understand~$H_{A,V}$ 
as the operator associated with the form~\eqref{form},
but now with the form core $C_0^\infty(\Real^2)$ 
being replaced by $C_0^\infty(\Real^2\setminus \{0\})$. 
To get an m-sectorial operator, 
we assume the validity of~\eqref{Ass1.bis} 
for all $\psi \in C_0^\infty(\Real^2\setminus \{0\})$ now.
If $\beta=0$, then~$H_{A,V}$ is unitarily equivalent 
to the magnetic-free operator~$H_{0,V}$. 
	
Following the proof of Lemma~\ref{Lem.crucial},
it can be shown that the identity	
\begin{eqnarray}\label{crucial.AB}
\lefteqn{
  \int |\nabla_{\!A} u^-|^2
  + \frac{|\Im\lambda|}{(\Re\lambda)^{1/2}}
  \left(
  \int r \, |\nabla_{\!A} u^-|^2
  - \frac{1}{2} \int\frac{|u^-|^2}{r}
  + \int r \, \Re V_+ \, |u^-|^2
  \right)
} 
  \nonumber \\
  && = \int \partial_r(r \, \Re V^{(1)}) \, |u^-|^2
  - \int \Re V^{(2)} \, |u^-|^2
  \nonumber \\
  && \quad
  - 2 \Re \int r \, \Re V^{(2)} \, u^- \overline{\partial_r^{A}u^-} 
  + 2 \Im \int r \, \Im V \, u^- \overline{\partial_r^{A}u^-} 
  + \frac{|\Im\lambda|}{(\Re\lambda)^{1/2}}
  \int r \, \Re V_- \, |u^-|^2
\end{eqnarray}
holds, provided that the solution of $H_{A,V}u=\lambda u$
satisfies conditions~\eqref{crucial.cond} without the first term
(containing~$B_\tau^*$).
Formally, \eqref{crucial.AB}~follows after plugging~\eqref{AB.field}
into~\eqref{crucial}.

The principal idea of this subsection is that 
the singular magnetic Hardy-type inequality 
due to Laptev and Weidl (see~\cite[Thm.~3]{Laptev-Weidl_1999})
\begin{equation}\label{Hardy.AB}
  \forall \psi\in C_0^\infty(\Real^2\setminus \{0\})
  \,, \qquad
  \int_{\Real^2} |\nabla_{\!A}\psi(x)|^2 \, \der x
  \geq \beta^2
  \int_{\Real^2} \frac{|\psi(x)|^2}{|x|^2} \, \der x
\end{equation}
holds true,
where~$\beta$ is introduced in~\eqref{beta}.
We also use its weighted variant included in the following lemma,
which is a magnetic improvement upon~\eqref{eq:hardyweight}.
\begin{Lemma}\label{Lem.Hardy.AB.weighted}
Let $A$ be given by~\eqref{AB}. Then one has
\begin{equation}\label{Hardy.AB.weighted}
  \forall \psi\in C_0^\infty(\Real^2\setminus \{0\})
  \,, \qquad
  \int_{\Real^2} |x| \, |\nabla_{\!A}\psi(x)|^2 \, \der x
  \geq \left(\frac{1}{4}+\beta^2\right)
  \int_{\Real^2} \frac{|\psi(x)|^2}{|x|} \, \der x
  \,.
\end{equation}
\end{Lemma}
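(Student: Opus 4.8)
The plan is to separate variables in polar coordinates, reducing~\eqref{Hardy.AB.weighted} to a one-dimensional Hardy inequality in the radial variable and to the magnetic Poincar\'e inequality on the circle in the angular variable; the two lower bounds then simply add. Writing $(x_1,x_2)=(r\cos\theta,r\sin\theta)$, so that $\der x = r\,\der r\,\der\theta$, and using that the potential~\eqref{AB} is purely tangential with $|A(x)|=|\alpha(\theta)|/r$, the magnetic gradient splits into its orthogonal radial and angular parts,
\[
  |\nabla_{\!A}\psi|^2 = |\partial_r\psi|^2 + \frac{1}{r^2}\,\bigl|(\partial_\theta+i\alpha(\theta))\psi\bigr|^2 .
\]
Hence $\int_{\Real^2}|x|\,|\nabla_{\!A}\psi|^2\,\der x = \int_0^{2\pi}\!\int_0^\infty r^2|\partial_r\psi|^2\,\der r\,\der\theta + \int_0^\infty\!\int_0^{2\pi}|(\partial_\theta+i\alpha(\theta))\psi|^2\,\der\theta\,\der r$, whereas $\int_{\Real^2}|x|^{-1}|\psi|^2\,\der x = \int_0^\infty\!\int_0^{2\pi}|\psi|^2\,\der\theta\,\der r$. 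It therefore suffices to bound the first summand from below by $\tfrac14\int_0^\infty\!\int_0^{2\pi}|\psi|^2$ and the second by $\beta^2\int_0^\infty\!\int_0^{2\pi}|\psi|^2$, and to add the results.

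For the radial summand I would freeze $\theta$ and apply to $f:=\psi(\cdot,\theta)\in C_0^\infty((0,\infty))$ the one-dimensional inequality $\int_0^\infty t^2|f'(t)|^2\,\der t \ge \tfrac14\int_0^\infty|f(t)|^2\,\der t$. This is proved exactly as Lemma~\ref{Lem.HP}: writing $\int_0^\infty|f|^2\,\der t=\int_0^\infty|f|^2\,t'\,\der t$, integrating by parts (no boundary contributions, since $f$ has compact support in $(0,\infty)$), and invoking the Schwarz inequality, one obtains $\int_0^\infty|f|^2\,\der t \le 2\bigl(\int_0^\infty|f|^2\,\der t\bigr)^{1/2}\bigl(\int_0^\infty t^2|f'|^2\,\der t\bigr)^{1/2}$, which is the claimed bound. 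Integrating over $\theta\in[0,2\pi)$ gives the $\tfrac14$ part.

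For the angular summand I would freeze $r$ and view $g:=\psi(r,\cdot)$ as a smooth $2\pi$-periodic function, for which the goal is $\int_0^{2\pi}|(\partial_\theta+i\alpha(\theta))g|^2\,\der\theta \ge \beta^2\int_0^{2\pi}|g|^2\,\der\theta$. Set $\Lambda(\theta):=\int_0^\theta\alpha(s)\,\der s$ and write $g(\theta)=e^{-i\Lambda(\theta)}e^{i\overline\alpha\theta}\,\tilde g(\theta)$; since $\Lambda(2\pi)=2\pi\overline\alpha$ the monodromy cancels, so $\tilde g$ is again $2\pi$-periodic, with $|g|=|\tilde g|$ and $(\partial_\theta+i\alpha(\theta))g=e^{-i\Lambda(\theta)}e^{i\overline\alpha\theta}(\partial_\theta+i\overline\alpha)\tilde g$. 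Expanding $\tilde g=\sum_{k\in\Int}c_k e^{ik\theta}$, one gets $\int_0^{2\pi}|(\partial_\theta+i\overline\alpha)\tilde g|^2\,\der\theta = 2\pi\sum_{k\in\Int}(k+\overline\alpha)^2|c_k|^2 \ge 2\pi\beta^2\sum_{k\in\Int}|c_k|^2 = \beta^2\int_0^{2\pi}|g|^2\,\der\theta$, because $\min_{k\in\Int}|k+\overline\alpha|=\dist(\overline\alpha,\Int)=\beta$ by~\eqref{beta}. This is precisely the circle inequality underlying~\eqref{Hardy.AB}, so one may alternatively quote the corresponding step of~\cite{Laptev-Weidl_1999}. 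Integrating over $r\in(0,\infty)$ gives the $\beta^2$ part, and adding the two contributions yields~\eqref{Hardy.AB.weighted}.

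I do not expect a genuine obstacle here: the estimate is morally~\eqref{Hardy.AB} combined with the classical radial Hardy inequality, and the key structural point is merely that the two lower bounds control \emph{different} (radial versus angular) pieces of $|\nabla_{\!A}\psi|^2$, so that they may legitimately be summed. The only places needing a little care are the absence of boundary terms in the one-dimensional Hardy inequality — guaranteed because $\psi\in C_0^\infty(\Real^2\setminus\{0\})$ keeps the support away from both $r=0$ and $r=\infty$ — and the fact that the gauge factor $e^{-i\Lambda(\theta)}$ is \emph{not} $2\pi$-periodic unless $\overline\alpha\in\Int$; extracting the constant flux $\overline\alpha$ before passing to Fourier modes is exactly what leaves the sharp constant $\beta^2$ instead of $0$. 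Since $\alpha$ is merely assumed bounded, one works throughout with $\psi\in C_0^\infty(\Real^2\setminus\{0\})$, for which the transformed profile $\tilde g$ lies in $W^{1,2}$ of the circle — all the Fourier argument requires.
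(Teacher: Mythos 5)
Your proof is correct and follows essentially the same route as the paper: pass to polar coordinates, split $|\nabla_{\!A}\psi|^2$ into orthogonal radial and angular parts, bound the radial piece by $\tfrac14$ via the classical weighted one-dimensional Hardy inequality, and bound the angular piece by $\beta^2$ via the lowest periodic eigenvalue of $[-i\partial_\theta+\alpha(\theta)]^2$ on the circle. The only difference is that you spell out the gauge-transformation-plus-Fourier proof of that eigenvalue fact, which the paper simply quotes; this is a welcome extra detail, not a different approach.
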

\begin{proof}
Passing to polar coordinates, we have
$$
\begin{aligned}
  \int_{\Real^2} |x| \, |\nabla_{\!A}\psi(x)|^2 \, \der x
  &= \int_0^{2\pi} \int_0^\infty |\partial_r \phi(r,\theta)|^2 
  \, r^2 \, \der r \, \der\theta
  + \int_0^{2\pi} \int_0^\infty 
  |(\partial_\theta + i\alpha(\theta))\phi(r,\theta)|^2 
  \, r^2 \, \der r \, \der\theta
  \\
  &\geq \frac{1}{4} 
  \int_0^{2\pi} \int_0^\infty |\phi(r,\theta)|^2 
  \, r^2 \, \der r \, \der\theta
  + \beta^2 \int_0^{2\pi} \int_0^\infty |\phi(r,\theta)|^2 
  \, r^2 \, \der r \, \der\theta
  \,,
\end{aligned}
$$
where~$\phi(r,\theta) := \psi(r\cos\theta,r\sin\theta)$.
Here the first integral on the right-hand side is 
estimated by a classical weighted one-dimensional Hardy inequality
(which is actually behind the proof of~\eqref{eq:hardyweight}),
while the bound on the second integral employs that the first eigenvalue
of the operator $[-i\partial_\theta + \alpha(\theta)]^2$ in $\sii((0,2\pi))$,
subject to periodic boundary conditions, equals~$\beta^2$.
\end{proof}

Lemma~\ref{Lem.Hardy.AB.weighted} enables us 
to handle the troublesome term 
on the left-hand side of~\eqref{crucial.AB} as follows.
Given any positive number~$\delta$, 
we write
\begin{align*} 
  - \int r \, |\nabla_{\!A} u^-|^2
  + \frac{1}{2} \int\frac{|u^-|^2}{r} 
  &\leq \left(\frac{1}{4} - \beta^2\right) 
  \int\frac{|u^-|^2}{r} 
  \\
  &= \left(\frac{1}{4} - \beta^2\right)
  \int_{D_\delta}\frac{|u^-|^2}{r} 
  + \left(\frac{1}{4} - \beta^2\right)
  \int_{\Real^2 \setminus \overline{D_\delta}}\frac{|u^-|^2}{r} 
  \\
  &\leq \left(\frac{1}{4} - \beta^2\right) \delta
  \int \frac{|u^-|^2}{r^2}
  + \left(\frac{1}{4} - \beta^2\right) \frac{1}{\delta}
  \int |u^-|^2 
  \\
  &\leq \left(\frac{1}{4} - \beta^2\right) \delta
  \int \frac{|u^-|^2}{r^2}
  + \left(\frac{1}{4} - \beta^2\right) \frac{1}{\delta\, |\Im\lambda|}
  \int |\Im V| \, |u^-|^2 
  \\
  &\leq \left(\frac{1}{4} - \beta^2\right) \frac{\delta}{\beta^2} 
  \int |\nabla_{\!A} u^-|^2 
  +  \left(\frac{1}{4} - \beta^2\right) \frac{1}{\delta\, |\Im\lambda|}
  \int |\Im V| \, |u^-|^2 
  \,,
\end{align*}
where the first inequality is due to~\eqref{Hardy.AB.weighted},
estimates in the second inequality are elementary,
the third estimate follows from~\eqref{G2} 
with a constant choice for the multiplier~$G_2$
and the last inequality employs~\eqref{Hardy.AB}.
Choosing 
$\delta := \beta^2 \, \epsilon \, (\Re\lambda)^{1/2} / |\Im\lambda|$
with any positive~$\epsilon$ and using~\eqref{sector},
we therefore get
\begin{equation*} 
  \frac{|\Im\lambda|}{(\Re\lambda)^{1/2}}
  \left(
  - \int r \, |\nabla_{\!A} u^-|^2
  + \frac{1}{2} \int\frac{|u^-|^2}{r} 
  \right)
  \leq  
  \left(\frac{1}{4} - \beta^2\right) \epsilon \int |\nabla_{\!A} u^-|^2 
  + \left(\frac{1}{4} - \beta^2\right)
  \frac{1}{\beta^2\,\epsilon} \int |\Im V| \, |u^-|^2 
  \,.
\end{equation*}
Neglecting the last term on the left-hand side of~\eqref{crucial.AB}
and treating the other terms as 
in the first inequality of~\eqref{robust.estimate},
we arrive at
\begin{eqnarray*} 
  \int |\nabla_{\!A} u^-|^2
  &\leq&  
  \int [\partial_r(r \, \Re V^{(1)})]_+ \, |u^-|^2
  \nonumber \\
  && 
  + \int |\Re V^{(2)}| \, |u^-|^2
  + 2 \, \sqrt{\int r^2 \, |\Re V^{(2)}|^2 \, |u^-|^2}
  \, \sqrt{\int |\nabla_{\!A}u^-|^2}
  \nonumber \\
  && 
  + 2 \, \sqrt{\int r^2 \, |\Im V|^2 \, |u^-|^2}
  \, \sqrt{\int |\nabla_{\!A}u^-|^2}
  + \sqrt{\int r^2 \, |\Re V_-|^2 \, |u^-|^2}
  \, \sqrt{\int |\Im V| \, |u^-|^2}
  \nonumber \\
  &&
  + \left(\frac{1}{4} - \beta^2\right) \epsilon \int |\nabla_{\!A} u^-|^2 
  + \left(\frac{1}{4} - \beta^2\right)
  \frac{1}{\beta^2\,\epsilon} \int |\Im V| \, |u^-|^2
  \nonumber \\
  &\leq&
  \left[
  b_2^2 + b_3^2 + b_4 + b_5 + b_6 a_2 
  + \left(\frac{1}{4} - \beta^2\right) \epsilon
  + \left(\frac{1}{4} - \beta^2\right)
  \frac{1}{\beta^2 \, \epsilon} a_2^2
  \right] 
  \int |\nabla_{\!A}u^-|^2
  \,. 
\end{eqnarray*}
Here the last inequality follows by~\eqref{Ass1.bis}  
and conditions~\eqref{Ass2.nsa} that are assumed to be valid
for all $\psi \in C_0^\infty(\Real^2\setminus\{0\})$.
The first integral in~\eqref{Ass2.nsa} is interpreted as zero,
so that this condition is always satisfied 
for the Aharonov-Bohm field~\eqref{AB}.
Making the hypothesis 
\begin{equation}\label{Ass.AB}
  b_2^2 + b_3^2 + b_4 + b_5 + b_6 a_2 
  + \left(\frac{1}{4} - \beta^2\right) \epsilon
  + \left(\frac{1}{4} - \beta^2\right)
  \frac{1}{\beta^2 \, \epsilon} a_2^2
  < 1
  \,,
\end{equation}
we therefore conclude with $u = 0$ as above.

We summarise the result of this subsection in the following theorem.
\begin{Theorem}\label{Thm.AB}
Let the vector potential~$A$ be given by~\eqref{AB}
with $\overline{\alpha} \not\in \Int$
and suppose that the scalar potential $V \in L^1(\Real^2;\Com)$
admits the decomposition $\Re V = \Re V^{(1)} + \Re V^{(2)}$ 
where $\Re V^{(1)} \in W^{1,1}_\mathrm{loc}(\Real^d)$ 
and $\Re V^{(2)} \in L_\mathrm{loc}^2(\Real^d)$.
Assume that there exist numbers 
$\epsilon,a_1,a_2,b_2,b_3,b_4,b_5,b_6 \in [0,1)$ 
satisfying~\eqref{Ass.AB}
such that~\eqref{Ass1.bis} and~\eqref{Ass2.nsa} hold
for all $\psi \in C_0^\infty(\Real^2\setminus\{0\})$.
Then $H_{A,V}$ has no eigenvalues, \ie\
$\sigma_\mathrm{p}(H_{A,V}) = \varnothing$.
\end{Theorem}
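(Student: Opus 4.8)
The plan is to mimic the argument of Section~\ref{Sec.app2}, but replacing the non-magnetic weighted Hardy inequality~\eqref{eq:hardyweight} and the localised Hardy--Poincar\'e estimate of Lemma~\ref{Lem.HP} by the sharper \emph{magnetic} inequalities~\eqref{Hardy.AB} and~\eqref{Hardy.AB.weighted}, which are available precisely because $\beta>0$ when $\overline\alpha\notin\Int$. The starting point is the identity~\eqref{crucial.AB}, which is the $d=2$ specialisation of~\eqref{crucial} with the $B_\tau^*$-term absent (the field being supported at the origin only); this requires checking the summability condition~\eqref{crucial.cond} stripped of its magnetic term, which follows as in the proof of Lemma~\ref{Lem.crucial} after replacing the form core $C_0^\infty(\Real^2)$ by $C_0^\infty(\Real^2\setminus\{0\})$ and repeating the cutoff/mollification scheme away from the origin.

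The main step is to control the troublesome bracket $\int r\,|\nabla_{\!A}u^-|^2 - \tfrac12\int |u^-|^2/r$ on the left-hand side of~\eqref{crucial.AB}. Here one uses~\eqref{Hardy.AB.weighted} to write $-\int r|\nabla_{\!A}u^-|^2 + \tfrac12\int|u^-|^2/r \le (\tfrac14-\beta^2)\int|u^-|^2/r$, then splits the integral over $D_\delta$ and its complement, estimating $r^{-1}\le\delta/r^2$ on $D_\delta$ and $r^{-1}\le\delta^{-1}$ outside. On $D_\delta$ one invokes the unweighted magnetic Hardy inequality~\eqref{Hardy.AB} to absorb $\int|u^-|^2/r^2$ into a small multiple of $\int|\nabla_{\!A}u^-|^2$; on the complement the term $\delta^{-1}\int|u^-|^2$ is converted, via~\eqref{G2} with constant multiplier $G_2$ and the sector inclusion~\eqref{sector}, into a multiple of $\int|\Im V|\,|u^-|^2$. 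Choosing $\delta=\beta^2\epsilon(\Re\lambda)^{1/2}/|\Im\lambda|$ makes the energy-dependent prefactor $|\Im\lambda|/(\Re\lambda)^{1/2}$ cancel, leaving exactly the bound displayed before~\eqref{Ass.AB}. The remaining right-hand-side terms of~\eqref{crucial.AB} are handled verbatim as in the first chain of~\eqref{robust.estimate}: Schwarz inequality on each term, the bound~\eqref{proceed} on the $\Re V_-$ term, and then conditions~\eqref{Ass1.bis} and~\eqref{Ass2.nsa} (now imposed on $C_0^\infty(\Real^2\setminus\{0\})$, with the first integral of~\eqref{Ass2.nsa} read as zero since $B\equiv0$ off the origin).

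Assembling these estimates yields $\int|\nabla_{\!A}u^-|^2 \le \Theta\int|\nabla_{\!A}u^-|^2$ with $\Theta$ the left-hand side of~\eqref{Ass.AB}; the hypothesis $\Theta<1$ forces $\nabla_{\!A}u^-=0$, whence $u^-=0$ by the diamagnetic inequality~\eqref{dia} and therefore $u=0$. Combined with the a priori sectorial localisation~\eqref{sector} and the exclusion of the zero eigenvalue via~\eqref{Ass1.bis}, this gives $\sigma_\mathrm{p}(H_{A,V})=\varnothing$. I expect the only delicate point to be the justification of~\eqref{crucial.AB} itself: one must verify that the regularisation argument of Lemma~\ref{Lem.crucial} goes through with the singular potential~\eqref{AB}, i.e.\ that the boundary terms produced near $x=0$ by the cutoffs vanish in the limit, using $A\in L^2_\mathrm{loc}(\Real^2\setminus\{0\})$ together with the control provided by~\eqref{Hardy.AB} on $\int|u|^2/r^2$ near the origin. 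Everything downstream is routine and parallels Section~\ref{Sec.app2}.
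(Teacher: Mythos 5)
Your proposal is correct and follows essentially the same route as the paper's own argument in Section~\ref{Sec.app3}: the identity~\eqref{crucial.AB}, the bound via Lemma~\ref{Lem.Hardy.AB.weighted}, the splitting over $D_\delta$ and its complement with the choice $\delta=\beta^2\epsilon(\Re\lambda)^{1/2}/|\Im\lambda|$, the conversion of $\int|u^-|^2$ via~\eqref{G2} and~\eqref{sector}, and the final absorption under~\eqref{Ass.AB} all match. The delicate point you flag (justifying~\eqref{crucial.AB} for the singular potential by running the regularisation away from the origin) is exactly the one the paper also leaves at the level of a sketch.
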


In particular, in the electric-free case $V=0$,
we recover the well known result that the Aharonov-Bohm
Laplacian~$H_{A,0}$ possesses no eigenvalues.
Notice also that the sufficient conditions of Theorem~\ref{Thm.AB}
substantially simplify in the regime when the Aharonov-Bohm
field is the strongest, \ie~$\beta=1/2$.


\subsection*{Acknowledgments}
We are grateful to Timo Weidl for valuable suggestions.
The research of D.K.\ was partially supported by FCT (Portugal)
through project PTDC/MAT-CAL/4334/2014.
The research of L.V.\ was partially supported by
ERCEA Advanced Grant 669689-HADE, MTM2014-53145-P, and IT641-13. 

\vfill
%
\renewcommand\refname{\normalsize References}
{\small
\bibliography{bib}
\bibliographystyle{amsplain}
}

\end{document}